\theoremstyle{plain}
\newtheorem{thm}{Theorem}
\newtheorem{lem}{Lemma}
\newtheorem{assum}{Assumption}
\newtheorem{prop}{Proposition}
\theoremstyle{definition}
\newtheorem{defn}{Definition}
\newtheorem{exmp}{Example}
\newtheorem{rem}{Remark}
\crefname{equation}{}{}
\crefname{assum}{Assumption}{Assumptions}
\crefname{defn}{Definition}{Definitions}
\crefname{exmp}{Example}{Examples}
\crefname{lem}{Lemma}{Lemmas}
\crefname{notation}{Notation}{Notation}
\crefname{prop}{Proposition}{Propositions}
\crefname{rem}{Remark}{Remarks}
\crefname{thm}{Theorem}{Theorems}
\title{Robustness properties of a large-amplitude, high-frequency extremum seeking control scheme\footnote{This research was supported by the German Research Foundation DFG, project number DA~767/13-1. Corresponding author: Raik~Suttner.}}
\author{Raik Suttner\footnote{University of Wuerzburg, Wuerzburg, Germany; \texttt{raik.suttner @mathematik.uni- wuerzburg.de}}, $\quad$ Sergey Dashkovskiy\footnote{University of Wuerzburg, Wuerzburg, Germany; \texttt{sergey.dash- kovskiy@mathematik.uni-wuerzburg.de}}}
\date{}
\begin{document}
\maketitle
\begin{abstract}
We analyze stability and robustness properties of an extremum seeking scheme that employs oscillatory dither signals with sufficiently large amplitudes and frequencies. Our study takes both input and output disturbances into account. We consider general~$L_\infty$-disturbances, which may resonate with the oscillatory dither signals. A suitable change of coordinates followed by an averaging procedure reveals that the closed-loop system approximates the behavior of an averaged system. This leads to the effect that stability and robustness properties carry over from the averaged system to the closed-loop system. In particular, we show that, if the averaged system is input-to-state stable (ISS), then the closed-loop system has ISS-like properties.
\end{abstract}


\section{Introduction}\label{sec:01}
Research on extremum seeking control has lead to a variety of new methods and techniques for optimization problems that only allow real-time measurements of an objective (or cost) function~\cite{AriyurBook,LiuBook,ScheinkerBook,ZhangBook}. For example, there are methods based on sliding mode control~\cite{Pan2003}, parameter estimation techniques~\cite{Guay2003}, or numerical optimization~\cite{Khong20132}. Many studies are motivated by practical applications, such as optimization of bio-processes~\cite{Wang1999}, maximum power point tracking of photovoltaic systems~\cite{Leyva2006}, ABS control~\cite{Drakunov1995}, or optimal cam timing~\cite{Popovic2006}. This focus on applications naturally leads to the question of robustness against disturbances. Most of the theoretical results assume ideal and undisturbed implementations of the proposed extremum seeking methods. In practice, however, such perfect conditions are difficult to realize. For instance, disturbances may occur in terms of external forces, measurement errors, or quantized inputs. It is therefore desirable to derive theoretical results that take disturbances into account. The present paper contains such an analysis.

For many extremum seeking control laws, it is difficult to provide a mathematically rigorous proof of robustness. Some studies discuss the influence of disturbances on a rather qualitative level and by numerical simulations; e.g., in~\cite{Zhang2009,Fu2011}. Quantitative statements about robustness usually require suitable assumptions on the disturbances. For example, in~\cite{Ye2016}, robustness of a numerical optimization-based extremum seeking scheme is investigated under the assumption that the disturbances are twice continuously differentiable with uniformly bounded first and second derivative. The problem of robustness becomes especially difficult in perturbation-based schemes, where oscillatory dither signals are fed in to probe the response of the objective function value. In this cases, resonances between the disturbances and the dither signals can lead to a complete loss of stability. The existing studies avoid the difficulty of undesired resonances by imposing suitable assumptions. For example, the main result in~\cite{Tan2010} ensures robustness of the proposed perturbation-based scheme under the assumption that the disturbances and the dither signals are uncorrelated. For the discrete-time perturbation-based scheme in~\cite{Stankovic2010}, resonances are prevented by the assumption that the disturbances take the form of a martingale difference sequence.

In the present paper, we focus on a perturbation-based extremum seeking scheme that employs oscillatory dither signals with sufficiently large amplitudes and frequencies. This scheme can be seen as a generalization of the control laws in~\cite{Zhang20071,Zhang20072}. Note that the analysis in~\cite{Zhang20071,Zhang20072} does not take external disturbances into account. In contrast to the closely-related schemes in~\cite{Krstic2000,Tan2006}, the proposed approach does not rely on internal stability properties of the control system but can be applied to potentially unstable systems. The strong dither signals have the purpose to overpower unstable dynamics and to force the system towards an extremum of the objective function. On the other hand, as indicated in the previous paragraph, undesired resonances between disturbances and the dither signals may have a harmful effect on the performance of the closed-loop system. So, at first glance, it seems to be unlikely that such a large-amplitude, high-frequency scheme should have strong robustness properties. However, after a suitable change of coordinates, we are able to prove that the closed-loop can even tolerate oscillatory disturbances with the same frequencies as the dither signals.

There is also a close connection between the extremum seeking scheme in the present paper and the class of large-amplitude, high-frequency control laws in~\cite{Duerr2013,Scheinker2013}. After a suitable change of coordinates, we can show that the closed-loop system approximates the behavior of an averaged system. It turns out that the averaged system is the same Lie bracket system as in~\cite{Duerr2013}; i.e., the averaged system is determined by Lie brackets of vector fields from the closed-loop system. In particular, this establishes a direct link between the methods in~\cite{Zhang20071,Zhang20072} and the methods in~\cite{Duerr2013,Scheinker2013}. In~\cite{Scheinker2016} robustness is shown for a similar scheme as in~\cite{Duerr2013} under the assumption that the disturbances are differentiable with uniformly bounded derivatives. In reality, however, disturbances occur as discontinuous functions and they may vary arbitrary fast. The key difference between the method in this paper and the method in~\cite{Duerr2013} is that the disturbances are not amplified by the dither signals. This beneficial feature allows us to treat general~$L_{\infty}$-disturbances in the analysis for the first time.

The paper is organized as follows. We start in \Cref{sec:02} by recalling some basic definitions and notation from differential geometry. In \Cref{sec:03}, we introduce suitable notions of stability and robustness for the closed-loop system. \Cref{sec:04} provides the tools that are used in \Cref{sec:05} to analyze the extremum seeking system. Our results are illustrated by examples in \Cref{sec:06}.


\section{Notation and Definitions}\label{sec:02}
\hypertarget{sumConvention}{\textbf{Summation convention.}} We use the convention that components of vectors are indexed with \emph{super}scripts, while lists of vectors are indexed with \emph{sub}scripts. Whenever an expression contains a repeated index, one as a subscript and the other as a superscript, summation is implied over this index. Points of a submanifold of a Euclidean space are written as row vectors and tangent vectors are written as column vectors.

Let~$\mathbb{N}$, $\mathbb{R}$, $\mathbb{R}_+$, and~$\bar{\mathbb{R}}_+$ denote the sets of positive integers, real numbers, positive real numbers, and nonnegative real numbers, respectively. For every~$n\in\mathbb{N}$, let~$\langle\cdot,\cdot\rangle$ denote the Euclidean inner product on~$\mathbb{R}^n$ and let~$|\cdot|$ denote the Euclidean norm on~$\mathbb{R}^n$. For every~$n\in\mathbb{N}$, let~$L_{\infty}^n$ denote the set of measurable and locally essentially bounded maps from~$\mathbb{R}$ to~$\mathbb{R}^n$. For every~$w\in{L_{\infty}^n}$, let~$\|w\|$ denote the essential supremum norm of~$w$.

Let~$\mathcal{K}$ denote the set of continuous, strictly increasing functions~$\gamma\colon\bar{\mathbb{R}}_+\to\bar{\mathbb{R}}_+$ with~$\gamma(0)=0$. Let~$\mathcal{K}_{\infty}$ denote the set of~$\gamma\in\mathcal{K}$ with~$\gamma(s)\to\infty$ as~$s\to\infty$. Let~$\mathcal{K}\mathcal{L}$ denote the set of~$\beta\colon\bar{\mathbb{R}}_+\times\bar{\mathbb{R}}_+\to\bar{\mathbb{R}}_+$ such that~$\beta(\cdot,t)\in\mathcal{K}$ for every~$t\in\bar{\mathbb{R}}_+$ and such that~$\beta(s,\cdot)\colon\bar{\mathbb{R}}_+\to\bar{\mathbb{R}}_+$ is decreasing with~$\beta(s,t)\to0$ as~$t\to\infty$ for every~$s\in\bar{\mathbb{R}}_+$.

Let~$\text{\sffamily{\upshape{M}}}$ be a closed (smooth) submanifold of a Euclidean space. Note that the Euclidean norm on the endowing Euclidean space turns~$\text{\sffamily{\upshape{M}}}$ into a metric space, and that every closed and bounded subset of~$\text{\sffamily{\upshape{M}}}$ is compact. For every~$\xi\in\text{\sffamily{\upshape{M}}}$ and every nonempty and compact~$K\subset\text{\sffamily{\upshape{M}}}$, let~$|\xi|_K$ denote the distance of~$\xi$ to~$K$ with respect to the Euclidean norm on the endowing Euclidean space. 

The reader is referred to~\cite{BulloBook} for basic definitions and properties of smooth vector fields and their flows. The word ``smooth'' always means of class~$C^{\infty}$. The assumption of smoothness is just a matter of convenience. The reader is invited to check that most of the constructions in this paper go through for twice continuously differentiable state dependent maps. Let~$\mathfrak{F}(\text{\sffamily{\upshape{M}}})$ denote the set of \emph{smooth real-valued functions} on~$\text{\sffamily{\upshape{M}}}$. Let~$\mathfrak{X}(\text{\sffamily{\upshape{M}}})$ denote the set of \emph{smooth vector fields} on~$\text{\sffamily{\upshape{M}}}$. Every~$X\in\mathfrak{X}(\text{\sffamily{\upshape{M}}})$ can be considered as a linear map from~$\mathfrak{F}(\text{\sffamily{\upshape{M}}})$ to~$\mathfrak{F}(\text{\sffamily{\upshape{M}}})$ that assigns to each~$f\in\mathfrak{F}(\text{\sffamily{\upshape{M}}})$ the \emph{Lie derivative}~$Xf\in\mathfrak{F}(\text{\sffamily{\upshape{M}}})$ of~$f$ along~$X$. For all~$X,Y\in\mathfrak{X}(\text{\sffamily{\upshape{M}}})$, the \emph{Lie bracket} of~$X,Y$ is the unique element~$[X,Y]\in\mathfrak{X}(\text{\sffamily{\upshape{M}}})$ such that
\begin{equation*}
[X,Y]f \ = \ X(Yf) - Y(Xf)
\end{equation*}
for every~$f\in\mathfrak{F}(\text{\sffamily{\upshape{M}}})$. For every smooth diffeomorphism~$\Phi\colon\text{\sffamily{\upshape{M}}}\to\text{\sffamily{\upshape{M}}}$ and every~$X\in\mathfrak{X}(\text{\sffamily{\upshape{M}}})$, the \emph{pull-back} of~$X$ by~$\Phi$ is the unique element~$\Phi^{\ast}X\in\mathfrak{X}(\text{\sffamily{\upshape{M}}})$ such that
\begin{equation*}
(\Phi^{\ast}X)f \ = \ X(f\circ\Phi^{-1})\circ\Phi
\end{equation*}
for every~$f\in\mathfrak{F}(\text{\sffamily{\upshape{M}}})$. For every~$X\in\mathfrak{X}(\text{\sffamily{\upshape{M}}})$, let~$(t,\xi)\mapsto\Phi^{X}_{t}(\xi)$ denote the \emph{flow} of~$X$. A \emph{time-dependent vector field on~$\text{\sffamily{\upshape{M}}}$} is a map that assigns to each pair~$(t,\xi)$ of~$\mathbb{R}\times\text{\sffamily{\upshape{M}}}$ a tangent vector to~$\text{\sffamily{\upshape{M}}}$ at~$\xi$. For every time-dependent vector field~$X$ on~$\text{\sffamily{\upshape{M}}}$ and every~$t\in\mathbb{R}$, let~$X_t$ denote the vector field on~$\text{\sffamily{\upshape{M}}}$ that is defined by~$X_t(\xi):=X(t,\xi)$.


\section{Stability notions}\label{sec:03}
In this section, we introduce a suitable terminology to describe robustness and stability properties of a parameter- and time-dependent system (denoted by~$\tilde{\Sigma}^{\omega}$) that approximates the trajectories of an averaged system (denoted by~$\bar{\Sigma}$). We start with the averaged system.

Let~$\text{\sffamily{\upshape{M}}}$ be a closed submanifold of a Euclidean space. For every vector~$w\in\mathbb{R}^n$ of disturbances, let~$\bar{\xi}\mapsto\bar{\Sigma}(\bar{\xi},w)$ be a vector field on~$\text{\sffamily{\upshape{M}}}$. It is assumed that, for every~$w\in{L_{\infty}^{n}}$, every~$t_0\in\mathbb{R}$, and every~$\xi_0\in\text{\sffamily{\upshape{M}}}$, the initial value problem
\begin{equation}\label{eq:01}
\dot{\bar{\xi}}(t) \ = \ \bar{\Sigma}(\bar{\xi}(t),w(t)), \qquad \bar{\xi}(t_0) \ = \ \xi_0
\end{equation}
has a unique maximal solution. We consider the following notions of stability for~$\bar{\Sigma}$; see, e.g.,~\cite{Sontag1995}.
\begin{defn}\label{def:01}
Let $K\subset\text{\sffamily{\upshape{M}}}$ be nonempty and compact. We say that~$\bar{\Sigma}$ is \emph{0-\textbf{g}lobally \textbf{a}symptotically \textbf{s}table (0-GAS)} w.r.t.~$K$ if there exists~$\bar{\beta}\in\mathcal{K}\mathcal{L}$ such that, for every~$\xi_0\in\text{\sffamily{\upshape{M}}}$, the maximal solution~$\bar{\xi}$ of~\cref{eq:01} with~$w\equiv0$ and~$t_0=0$ satisfies
\begin{equation*}
|\bar{\xi}(t)|_K \ \leq \ \bar{\beta}(|\xi_0|_K,t)\qquad\forall{t\geq0}.
\end{equation*}
\end{defn}
\begin{defn}\label{def:02}
Let $K\subset\text{\sffamily{\upshape{M}}}$ be nonempty and compact. We say that~$\bar{\Sigma}$ is \emph{\textbf{i}nput-to-\textbf{s}tate \textbf{s}table (ISS)} w.r.t.~$K$ if there exist~$\bar{\beta}\in\mathcal{K}\mathcal{L}$ and~$\bar{\gamma}\in\mathcal{K}$ such that, for every~$w\in{L_{\infty}^{n}}$ and every~$\xi_0\in\text{\sffamily{\upshape{M}}}$, the maximal solution~$\bar{\xi}$ of~\cref{eq:01} with~$t_0=0$ satisfies
\begin{equation*}
|\bar{\xi}(t)|_K \ \leq \ \bar{\beta}(|\xi_0|_K,t) + \bar{\gamma}(\|w\|)\qquad\forall{t\geq0}.
\end{equation*}
\end{defn}
For every~$\omega\in\mathbb{R}_+$ and every~$w\in\mathbb{R}^n$, let~$(t,\tilde{\xi})\mapsto\tilde{\Sigma}^{\omega}(t,\tilde{\xi},w)$ be a time-dependent vector field on~$\text{\sffamily{\upshape{M}}}$. It is assumed that, for every~$\omega\in\mathbb{R}_+$, every~$w\in{L_{\infty}^{n}}$, every~$t_0\in\mathbb{R}$, and every~$\xi_0\in\text{\sffamily{\upshape{M}}}$, the initial value problem
\begin{equation}\label{eq:02}
\dot{\tilde{\xi}}(t) \ = \ \tilde{\Sigma}^{\omega}(t,\tilde{\xi}(t),w(t)), \qquad \tilde{\xi}(t_0) \ = \ \xi_0
\end{equation}
has a unique maximal solution. We use the notation~$(\tilde{\Sigma}^{\omega})_{\omega}$ for the family of~$\tilde{\Sigma}^{\omega}$ indexed by $\omega\in\mathbb{R}_+$. In the subsequent sections, the parameter~$\omega$ will play the role of a frequency. The map~$\tilde{\Sigma}^{\omega}$ will be chosen in such a way that, with increasing value of~$\omega$, the solutions of~\cref{eq:02} approximate the solutions of~\cref{eq:01} on compact time intervals and within compact subsets of~$\text{\sffamily{\upshape{M}}}$. For our purposes, it turns out to be convenient to introduce the following two notions of local approximations.
\begin{defn}\label{def:03}
We say that~$(\tilde{\Sigma}^{\omega})_{\omega}$ is a \emph{small-disturbance approximation} of~$\bar{\Sigma}$ if, for every compact set~$C\subset\text{\sffamily{\upshape{M}}}$ and all~$\delta,\Delta\in\mathbb{R}_+$, there exist~$\omega_0,e\in\mathbb{R}_+$ such that, for every~$t_0\in\mathbb{R}$ and every~$\xi_0\in{C}$, the following implication holds: If the maximal solution~$\bar{\xi}$ of~\cref{eq:01} with~$w\equiv0$ satisfies~$\bar{\xi}(t)\in{C}$ for every~$t\in[t_0,t_0+\Delta]$, then, for every~$\omega\geq\omega_0$ and every~$w\in{L_{\infty}^n}$ with~$\|w\|\leq{e}$, the maximal solution~$\tilde{\xi}$ of~\cref{eq:02} satisfies
\begin{equation*}
|\tilde{\xi}(t) - \bar{\xi}(t)| \ \leq \ \delta\qquad\forall{t\in[t_0,t_0+\Delta]}.
\end{equation*}
\end{defn}
\begin{defn}\label{def:04}
We say that~$(\tilde{\Sigma}^{\omega})_{\omega}$ is a \emph{large-disturbance approximation} of~$\bar{\Sigma}$ if, for every compact set~$C\subset\text{\sffamily{\upshape{M}}}$ and all~$\delta,\Delta,e\in\mathbb{R}_+$, there exists~$\omega_0\in\mathbb{R}_+$ such that, for every~$w\in{L_{\infty}^n}$ with~$\|w\|\leq{e}$, every~$t_0\in\mathbb{R}$, and every~$\xi_0\in{C}$, the following implication holds: If the maximal solution~$\bar{\xi}$ of~\cref{eq:01} satisfies~$\bar{\xi}(t)\in{C}$ for every~$t\in[t_0,t_0+\Delta]$, then, for every~$\omega\geq\omega_0$, the maximal solution~$\tilde{\xi}$ of~\cref{eq:02} satisfies
\begin{equation*}
|\tilde{\xi}(t) - \bar{\xi}(t)| \ \leq \ \delta\qquad\forall{t\in[t_0,t_0+\Delta]}.
\end{equation*}
\end{defn}
Note that \Cref{def:03,def:04} reduce to the same approximation property as in~\cite{Moreau2000,Duerr2013} if no disturbances are present (i.e.~$w\equiv0$). Next, we extend the notions of semi-global practical stability for parameter-dependent systems in~\cite{Moreau2000,Duerr2013} to systems with disturbances.
\begin{defn}\label{def:05}
Let $K\subset\text{\sffamily{\upshape{M}}}$ be nonempty and compact. We say that~$(\tilde{\Sigma}^{\omega})_{\omega}$ is \emph{small-disturbance \textbf{s}emi-\textbf{g}lobally \textbf{p}ractically \textbf{u}niformally \textbf{a}symptotically \textbf{s}table (SGPUAS)} w.r.t.~$K$ if there exists~$\tilde{\beta}\in\mathcal{K}\mathcal{L}$ such that, for all~$\rho,\nu\in\mathbb{R}_+$, there exist~$\omega_0,e\in\mathbb{R}_+$ such that, for every~$\omega\geq\omega_0$, every~$w\in{L_{\infty}^n}$ with~$\|w\|\leq{e}$, every~$t_0\in\mathbb{R}$, and every~$\xi_0\in\text{\sffamily{\upshape{M}}}$ with~$|\xi_0|_K\leq\rho$, the maximal solution~$\tilde{\xi}$ of~\cref{eq:02} satisfies
\begin{equation*}
|\tilde{\xi}(t)|_K \ \leq \ \tilde{\beta}(|\xi_0|_K,t-t_0) + \nu\qquad\forall{t\geq{t_0}}.
\end{equation*}
\end{defn}
\begin{defn}\label{def:06}
Let $K\subset\text{\sffamily{\upshape{M}}}$ be nonempty and compact. We say that~$(\tilde{\Sigma}^{\omega})_{\omega}$ is \emph{large-disturbance SGPUAS} w.r.t.~$K$ if there exist~$\tilde{\beta}\in\mathcal{K}\mathcal{L}$ and~$\tilde{\gamma}\in\mathcal{K}$ such that, for all~$\rho,\nu,e\in\mathbb{R}_+$, there exists~$\omega_0\in\mathbb{R}_+$ such that, for every~$\omega\geq\omega_0$, every~$w\in{L_{\infty}^n}$ with~$\|w\|\leq{e}$, every~$t_0\in\mathbb{R}$, and every~$\xi_0\in\text{\sffamily{\upshape{M}}}$ with~$|\xi_0|_K\leq\rho$, the maximal solution~$\tilde{\xi}$ of~\cref{eq:02} satisfies
\begin{equation*}
|\tilde{\xi}(t)|_K \ \leq \ \tilde{\beta}(|\xi_0|_K,t-t_0) + \tilde{\gamma}(\|w\|) + \nu\qquad\forall{t\geq{t_0}}.
\end{equation*}
\end{defn}
The key difference between \Cref{def:05,def:06} is the maximum magnitude~$e\in\mathbb{R}_+$ of disturbances. While \Cref{def:05} requires that~$e\in\mathbb{R}_+$ is sufficiently small, \Cref{def:06} allows arbitrary large~$e\in\mathbb{R}_+$. In contrast to the global notions of stability in \Cref{def:01,def:02}, the term ``semi-global'' in \Cref{def:05,def:06} emphasizes the restriction to arbitrary large but compact sets. This is due the fact that the approximations in \Cref{def:03,def:04} are only local properties. A similar reasoning as in~\cite{Moreau2000,Duerr2013} leads to following result (we omit the proof).
\begin{prop}\label{prop:01}
Let~$K\subset\text{\sffamily{\upshape{M}}}$ be nonempty and compact. Then, the following implications hold:
\begin{enumerate}[label=(\alph*)]
	\item If~$\bar{\Sigma}$ is 0-GAS w.r.t.~$K$ and if~$(\tilde{\Sigma}^{\omega})_{\omega}$ is a small-disturbance approximation of~$\bar{\Sigma}$, then~$(\tilde{\Sigma}^{\omega})_{\omega}$ is small-disturbance SGPUAS w.r.t.~$K$.
	\item If~$\bar{\Sigma}$ is ISS w.r.t.~$K$ and if~$(\tilde{\Sigma}^{\omega})_{\omega}$ is a large-disturbance approximation of~$\bar{\Sigma}$, then~$(\tilde{\Sigma}^{\omega})_{\omega}$ is large-disturbance SGPUAS w.r.t.~$K$.
\end{enumerate}
\end{prop}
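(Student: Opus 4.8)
The plan is to prove both parts by one mechanism: a time-discretization argument that transfers the stability estimate of the averaged system~$\bar\Sigma$ to the trajectory~$\tilde\xi$ over a single interval of fixed length~$T$, using the closeness guaranteed by the approximation property, and then iterates this transfer across the partition $t_0<t_0+T<t_0+2T<\cdots$. Throughout I would take $\tilde\beta:=\bar\beta$, so that for each prescribed $\rho,\nu$ the only objects to construct are the thresholds $\omega_0,e$ (for~(a)) or $\omega_0$ (for~(b)). Fix $\rho,\nu\in\mathbb{R}_+$. Evaluating \Cref{def:01} at $t=0$ forces $\bar\beta(s,0)\geq s$; hence, by 0-GAS, every unforced averaged solution with $|\xi_0|_K\leq\rho$ remains within $R:=\bar\beta(\rho,0)$ of~$K$ for all $t\geq0$.

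To neutralize the transient overshoot that the $\mathcal{K}\mathcal{L}$-estimate permits, I would choose an inner radius $r\in(0,\nu]$ with $\bar\beta(r,0)\leq\nu/2$, a horizon $T$ with $\bar\beta(R,T)\leq r/2$ and $\bar\beta(r,T)\leq r/2$, an accuracy $\delta\leq r/2$, and the compact set $C:=\{\xi:|\xi|_K\leq R+\nu+1\}$, which contains both the initial transient and the later excursions. Applying \Cref{def:03} to $C$, $\delta$, and $\Delta=T$ yields $\omega_0,e\in\mathbb{R}_+$. On $[t_0,t_0+T]$ the comparison trajectory is the unforced averaged solution $\bar\xi$, which stays in $C$, so $|\tilde\xi(t)-\bar\xi(t)|\leq\delta$; this gives the transient bound $|\tilde\xi(t)|_K\leq\bar\beta(|\xi_0|_K,t-t_0)+\delta$ and, at the node, $|\tilde\xi(t_0+T)|_K\leq\bar\beta(\rho,T)+\delta\leq r$. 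Since \Cref{def:03} is uniform in $t_0$, the same $\omega_0,e$ govern every later interval: restarting the comparison from $\tilde\xi(t_0+kT)$, whose $K$-distance is at most $r$, it stays $\leq\bar\beta(r,0)\leq\nu/2$ and returns to $\leq\bar\beta(r,T)\leq r/2$ at the next node. By induction, $|\tilde\xi(t_0+kT)|_K\leq r$ for every $k$ and $|\tilde\xi(t)|_K\leq\nu/2+\delta\leq\nu$ on $[t_0+T,\infty)$; concatenating with the transient bound gives $|\tilde\xi(t)|_K\leq\bar\beta(|\xi_0|_K,t-t_0)+\nu$ for all $t\geq t_0$, which is small-disturbance SGPUAS with $\tilde\beta=\bar\beta$.

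I expect the main obstacle to be exactly this reconciliation of overshoot with invariance. Because 0-GAS only yields $\bar\beta(s,0)\geq s$, reaching the target $\nu$-ball does not by itself keep $\tilde\xi$ inside it; one must first drive the trajectory into the strictly smaller $r$-ball, chosen so that its overshoot $\bar\beta(r,0)$ together with the approximation error stays below $\nu$, and then use the contractions $\bar\beta(R,T)\leq r/2$ and $\bar\beta(r,T)\leq r/2$ to make that $r$-ball approximately forward invariant under $\tilde\xi$. The uniformity in $t_0$ in \Cref{def:03,def:04} is indispensable here: it is what allows $\omega_0$ and $e$ to be fixed once for all of the infinitely many intervals, so that the per-interval errors do not accumulate.

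Part~(b) follows the same scheme with two modifications. The comparison solution in \Cref{def:04} is now the forced averaged solution, so ISS contributes a term $\bar\gamma(\|w\|)$ to every estimate, $R$ becomes $\bar\beta(\rho,0)+\bar\gamma(e)$, the disturbance magnitude $e$ is prescribed arbitrarily, and only $\omega_0$ is extracted. The iteration then drives $\tilde\xi$ into $\{\xi:|\xi|_K\leq\bar\gamma(\|w\|)+r\}$, and the overshoot upon restart is at most $\bar\beta(\bar\gamma(\|w\|)+r,0)+\bar\gamma(\|w\|)$. Since $\|w\|$ may be arbitrarily large, this cannot be absorbed into $\nu$ and must instead be folded into the gain: after shrinking $r$ so that $\bar\beta(2r,0)\leq\nu/2$, one sets $\tilde\gamma(s):=\bar\beta(2\bar\gamma(s),0)+\bar\gamma(s)\in\mathcal{K}$, which is independent of $\rho,\nu$ and dominates the overshoot in both the small- and large-$\|w\|$ regimes. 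The same iteration then yields $|\tilde\xi(t)|_K\leq\bar\beta(|\xi_0|_K,t-t_0)+\tilde\gamma(\|w\|)+\nu$, i.e., large-disturbance SGPUAS; devising $\tilde\gamma$ to capture the $\|w\|$-dependent overshoot is the one genuinely new point relative to~(a).
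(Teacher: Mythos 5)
The paper itself omits the proof of \Cref{prop:01}, deferring to the reasoning of its cited references on practical stability and Lie bracket approximation, and your argument is exactly that standard reasoning written out in full: a finite-horizon comparison with the averaged trajectory on a compact set, uniform in~$t_0$, iterated over intervals of length~$T$, with an inner radius~$r$ chosen so that the $\mathcal{K}\mathcal{L}$-overshoot stays below~$\nu$, and, in part~(b), the gain $\tilde{\gamma}(s):=\bar{\beta}(2\bar{\gamma}(s),0)+\bar{\gamma}(s)$ absorbing the disturbance-dependent overshoot. Your proof is correct as it stands; the only points left implicit are routine: restarting the 0-GAS/ISS estimates at the nodes~$t_0+kT$ uses time-invariance of~$\bar{\Sigma}$ (\Cref{def:01,def:02} are stated with~$t_0=0$), and in part~(b) letting the horizon~$T$ and the compact set~$C$ depend on~$e$ is legitimate because \Cref{def:04} prescribes~$e$ before~$\omega_0$ is extracted, whereas in part~(a) your comparison trajectories are unforced, so~$C$ correctly does not depend on the~$e$ that \Cref{def:03} returns.
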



\section{Robust Lie Bracket Approximations}\label{sec:04}
We already know from \Cref{prop:01} that local approximations of trajectories (in the sense of \Cref{def:03,def:04}) lead to the effect that certain robustness and stability properties carry over from one system to another. The same principle holds for the extremum seeking system that we study later in \Cref{sec:05}. To be more precise, we will see that, after a suitable change of coordinates, the closed-loop system is a parameter- and time-dependent system, denoted by~$\tilde{\Sigma}^{a,\omega}$, that approximates the trajectories of an averaged system, denoted by~$\bar{\Sigma}^a$, with increasing parameter~$\omega\in\mathbb{R}_+$. In this section, we study the underlying approximation properties for a slightly more general type of system than the particular extremum seeking system in \Cref{sec:05}. As in~\cite{Duerr2013}, our investigations will lead us to a differential geometric explanation in terms of Lie brackets. In contrast to the approach in~\cite{Duerr2013}, we approximate Lie brackets in such a way that disturbances are not amplified by the oscillatory dither signals; see also \Cref{rem:05} in \Cref{sec:05}. This in turn leads to a certain degree of robustness with respect to general~$L_{\infty}$-disturbances.

Throughout this section, we suppose that
\begin{itemize}
	\item~$\text{\sffamily{\upshape{M}}}$ is a closed submanifold of a Euclidean space,
	\item~$X_1,\ldots,X_l\in\mathfrak{X}(\text{\sffamily{\upshape{M}}})$, $l\in\mathbb{N}$,
	\item~$Y_0,Y_1,\ldots,Y_m\in\mathfrak{X}(\text{\sffamily{\upshape{M}}})$, $m\in\mathbb{N}$,
	\item~$Z_1,\ldots,Z_n\in\mathfrak{X}(\text{\sffamily{\upshape{M}}})$, $n\in\mathbb{N}$,
	\item~$u\in{L_{\infty}^l}$, $v\in{L_{\infty}^m}$.
\end{itemize}
The maps~$u,v$ shall play the role of \emph{oscillatory dither signals}. For this reason, we assume the following.
\begin{assum}\label{ass:01}
There exists~$T\in\mathbb{R}_+$ such that~$u,v$ are~$T$-periodic and zero-mean.
\end{assum}
\Cref{ass:01} causes the oscillatory dither signals~$u,v$ to resonate. This in turn can lead to an approximation of Lie brackets of the~$X_i$ and the~$Y_j$ if we combine the dither signals and the vector fields as follows. Define a time-dependent vector field~$\hat{X}$ on~$\text{\sffamily{\upshape{M}}}$ by
\begin{equation*}
\hat{X}(\tau,\xi) \ := \ u^i(\tau)\,X_i(\xi)
\end{equation*}
in the \hyperlink{sumConvention}{summation convention} of \Cref{sec:02}. For every~$a\in\mathbb{R}_+$, define a time-dependent vector field~$\hat{Y}^a$ on~$\text{\sffamily{\upshape{M}}}$ by
\begin{equation*}
\hat{Y}^a(\tau,\xi) \ := \ Y_0(\xi) + \frac{1}{a}\,v^j(\tau)\,Y_j(\xi).
\end{equation*}
For every vector~$w\in\mathbb{R}^n$ of \emph{disturbances}, define a smooth vector field~$\xi\mapsto\hat{Z}(\xi,w)$ on~$\text{\sffamily{\upshape{M}}}$ by
\begin{equation*}
\hat{Z}(\xi,w) \ := \ w^k\,Z_k(\xi).
\end{equation*}
For all~$a,\omega\in\mathbb{R}_+$ and every~$w\in\mathbb{R}^n$, define a time-dependent vector field~$(t,\xi)\mapsto\Sigma^{a,\omega}(t,\xi,w)$ on~$\text{\sffamily{\upshape{M}}}$ by
\begin{equation}\label{eq:03}
\Sigma^{a,\omega}(t,\xi,w) \ := \ a\omega\hat{X}(\omega{t},\xi) + \hat{Y}^a(\omega{t},\xi) + \hat{Z}(\xi,w).
\end{equation}
In \Cref{sec:05}, the map~$\Sigma^{a,\omega}$ describes the right-hand side of the closed-loop system. In what follows, we study the integral curves of~$\Sigma^{a,\omega}$, for fixed~$a\in\mathbb{R}_+$, in the large-amplitude, high-frequency limit~$\omega\to\infty$. To get rid of the large-amplitude, high-frequency term in~\cref{eq:03}, we take the pull-back of~$\hat{Y}^a$ and~$\hat{Z}$ by the flow of~$\hat{X}$. In general, we cannot expect that~$\hat{X}$ is complete. However, the situation changes if we assume the following.
\begin{assum}\label{ass:02}
The vector fields~$X_1,\ldots,X_l$ are complete and commute pairwise.
\end{assum}
The above assumption ensures that the integral curves of~$\hat{X}$ are~$T$-periodic. To make this statement more precise, we define~$X^{\tau}\in\mathfrak{X}(\text{\sffamily{\upshape{M}}})$ for every fixed~$\tau\in\mathbb{R}$ by
\begin{equation}\label{eq:04}
X^{\tau}(\xi) \ := \ U^i(\tau)\,X_i(\xi),
\end{equation}
where~$U\colon\mathbb{R}\to\mathbb{R}^l$ is an antiderivative of~$u$ defined by
\begin{equation}\label{eq:05}
U(\tau) \ := \ \int_{0}^{\tau}u(\sigma)\,\mathrm{d}\sigma.
\end{equation}
Now, the flow of the large-amplitude, high-frequency vector field in~\cref{eq:03} is given by the flow of~$X^{\tau}$ as follows (see, e.g., Proposition~9.13 and Remark~9.14 in~\cite{BulloBook}).
\begin{rem}\label{rem:01}
Suppose that \Cref{ass:01,ass:02} are satisfied. Then, for every~$\tau\in\mathbb{R}$, the vector field~$X^{\tau}$ is complete. Moreover, for all~$a,\omega\in\mathbb{R}_+$ and every~$\xi_0\in\text{\sffamily{\upshape{M}}}$, the maximal solution~$\xi\colon{I}\to\text{\sffamily{\upshape{M}}}$ of
\begin{equation*}
\dot{\xi}(t) \ = \ a\omega\hat{X}(\omega{t},\xi(t)), \qquad \xi(0) \ = \ \xi_0
\end{equation*}
is given by
\begin{equation*}
\xi(t) \ = \ \Phi^{X^{\omega{t}}}_a(\xi_0)
\end{equation*}
for every~$t\in{I}=\mathbb{R}$.
\end{rem}
For the sake of simplicity, we also make the following assumption (which is trivially satisfied for the particular problem studied in \Cref{sec:05}).
\begin{assum}\label{ass:03}
For every~$i\in\{1,\ldots,l\}$ and every~$k\in\{1,\ldots,n\}$, the vector fields~$X_i$ and~$Z_k$ commute.
\end{assum}
Because of \Cref{ass:03}, we may conclude the following simplifying identities from the well-known \emph{flow interpretation of the Lie derivative of a vector field} (see, e.g., Proposition~3.85 in~\cite{BulloBook}).
\begin{rem}\label{rem:02}
Suppose that \Cref{ass:01,ass:02,ass:03} are satisfied. Then~$(\Phi^{X^{\tau}}_a)^{\ast}X_i=X_i$ and~$(\Phi^{X^{\tau}}_a)^{\ast}Z_k=Z_k$ for every~$a\in\mathbb{R}_+$, every~$\tau\in\mathbb{R}$, and all~$i\in\{1,\ldots,l\}$, $k\in\{1,\ldots,n\}$.
\end{rem}
For the rest of this section, we suppose that \Cref{ass:01,ass:02,ass:03} are satisfied. Because of \Cref{rem:01}, for every~$a\in\mathbb{R}_+$, a well-defined time-dependent vector field~$\tilde{Y}^a$ on~$\text{\sffamily{\upshape{M}}}$ is given by
\begin{equation*}
\tilde{Y}^a(\tau,\tilde{\xi}) \ := \ ((\Phi^{X^{\tau}}_a)^{\ast}\hat{Y}^a_{\tau})(\tilde{\xi}).
\end{equation*}
For all~$a,\omega\in\mathbb{R}_+$ and every~$w\in\mathbb{R}^n$, define a time-dependent vector field~$(t,\tilde{\xi})\mapsto\tilde{\Sigma}^{a,\omega}(t,\tilde{\xi},w)$ on~$\text{\sffamily{\upshape{M}}}$ by
\begin{equation}\label{eq:06}
\tilde{\Sigma}^{a,\omega}(t,\tilde{\xi},w) \ := \ \tilde{Y}^a(\omega{t},\tilde{\xi}) + \hat{Z}(\tilde{\xi},w).
\end{equation}
In \Cref{sec:05}, the map~$\tilde{\Sigma}^{a,\omega}$ describes the right-hand side of the closed-loop system after the change of coordinates in \Cref{rem:03} below.

Because of \Cref{rem:01,rem:02}, we may say that~$\tilde{\Sigma}^{a,\omega}$ is the pull-back of~$\Sigma^{a,\omega}$ by~$\Phi^{X^{\omega{t}}}_a$. The \emph{variation of constants formula} (see, e.g., Proposition~9.6 in~\cite{BulloBook}) provides the following connection between the integral curves.
\begin{rem}\label{rem:03}
Suppose that \Cref{ass:01,ass:02,ass:03} are satisfied. Then, for all~$a,\omega\in\mathbb{R}_+$, every~$w\in{L_{\infty}^n}$, every~$t_0\in\mathbb{R}$, and every~$\tilde{\xi}_0\in\text{\sffamily{\upshape{M}}}$, the maximal solution~$\xi\colon{I}\to\text{\sffamily{\upshape{M}}}$ of
\begin{equation*}
\dot{\xi}(t) \ = \ \Sigma^{a,\omega}(t,\xi(t),w(t)), \qquad \xi(t_0) \ = \ \Phi^{X^{\omega{t_0}}}_a(\tilde{\xi}_0)
\end{equation*}
and the maximal solution~$\tilde{\xi}\colon\tilde{I}\to\text{\sffamily{\upshape{M}}}$ of
\begin{equation*}
\dot{\tilde{\xi}}(t) \ = \ \tilde{\Sigma}^{a,\omega}(t,\tilde{\xi}(t),w(t)), \qquad \tilde{\xi}(t_0) \ = \ \tilde{\xi}_0
\end{equation*}
are related by the change of coordinates
\begin{equation}\label{eq:07}
\xi(t) \ = \ \Phi^{X^{\omega{t}}}_a(\tilde{\xi}(t))
\end{equation}
for every~$t\in{I=\tilde{I}}$.
\end{rem}
Note that~$\tilde{Y}^{a}$ is~$T$-periodic. For every~$a\in\mathbb{R}_+$, define the \emph{averaged vector field}~$\bar{Y}^a\in\mathfrak{X}(\text{\sffamily{\upshape{M}}})$ of~$\tilde{Y}^a$ by
\begin{equation*}
\bar{Y}^a(\bar{\xi}) \ := \ \frac{1}{T}\int_{0}^{T}\tilde{Y}^a(\tau,\bar{\xi})\,\mathrm{d}\tau
\end{equation*}
(see, e.g., Section~9.1 in~\cite{BulloBook}). For every~$a\in\mathbb{R}_+$ and every~$w\in\mathbb{R}^n$, define a vector field~$\bar{\xi}\mapsto\bar{\Sigma}^a(\bar{\xi},w)$ on~$\text{\sffamily{\upshape{M}}}$ by
\begin{equation}\label{eq:08}
\bar{\Sigma}^a(\bar{\xi},w) \ := \ \bar{Y}^a(\bar{\xi}) + \hat{Z}(\bar{\xi},w).
\end{equation}
In \Cref{sec:05}, the map~$\bar{\Sigma}^{a}$ describes the right-hand side of the averaged system associated with closed-loop system.

By applying a suitable \emph{first-order averaging} procedure (see, e.g., proof of Theorem~9.15 in~\cite{BulloBook}) and the Gronwall lemma, one can prove the following approximation properties (in the terminology of \Cref{def:03,def:04}).
\begin{prop}\label{prop:02}
Suppose that \Cref{ass:01,ass:02,ass:03} are satisfied. Fix an arbitrary~$a\in\mathbb{R}_+$. Then:
\begin{enumerate}[label=(\alph*)]
	\item~$\!\!(\tilde{\Sigma}^{a,\omega})_{\omega}$ is a small-disturbance approximation of~$\bar{\Sigma}^a$,
	\item~$\!\!(\tilde{\Sigma}^{a,\omega})_{\omega}$ is a large-disturbance approximation of~$\bar{\Sigma}^a$.
\end{enumerate}
\end{prop}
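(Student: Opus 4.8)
The plan is to exploit the fact that, in the pulled-back coordinates, the disturbance term~$\hat{Z}$ enters~$\tilde{\Sigma}^{a,\omega}$ and~$\bar{\Sigma}^a$ in exactly the same way. Indeed, comparing~\cref{eq:06} with~\cref{eq:08}, the only difference between the two right-hand sides is the oscillatory term~$\tilde{Y}^a(\omega{t},\cdot)$ versus its time-average~$\bar{Y}^a$; the contribution~$\hat{Z}(\cdot,w)$ appears verbatim in both systems. This is precisely where \Cref{ass:03} and \Cref{rem:02} are used: they guarantee that pulling back does not amplify~$\hat{Z}$ by the dither signals, so that the disturbance is not affected by the averaging. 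The problem therefore reduces to a first-order averaging estimate for the fast-oscillating vector field~$\tilde{Y}^a$, carried out uniformly over compact sets and over bounded disturbances, which is the standard setting of Theorem~9.15 in~\cite{BulloBook} combined with the Gronwall lemma.

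First I would fix a compact set~$C\subset\text{\sffamily{\upshape{M}}}$ and enlarge it to a slightly larger compact set~$C'$ containing~$C$ in its interior. On~$C'$, the vector fields~$\tilde{Y}^a$, $\bar{Y}^a$, $Z_1,\ldots,Z_n$ and their first derivatives are bounded by a constant~$M$ and Lipschitz with constant~$L$. Since~$\tilde{Y}^a-\bar{Y}^a$ is~$T$-periodic with zero mean, I introduce the bounded, $T$-periodic antiderivative
\begin{equation*}
V^a(\tau,\xi) \ := \ \int_{0}^{\tau}\bigl(\tilde{Y}^a(\sigma,\xi)-\bar{Y}^a(\xi)\bigr)\,\mathrm{d}\sigma,
\end{equation*}
whose spatial derivative is also bounded on~$C'$. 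Writing the solutions of~\cref{eq:02,eq:01} in integral form and subtracting, the deviation~$\phi(t):=|\tilde{\xi}(t)-\bar{\xi}(t)|$ splits into a Lipschitz term~$\int_{t_0}^{t}|\bar{Y}^a(\tilde{\xi})-\bar{Y}^a(\bar{\xi})|\,\mathrm{d}s$, an averaging term~$\int_{t_0}^{t}\bigl(\tilde{Y}^a(\omega{s},\tilde{\xi})-\bar{Y}^a(\tilde{\xi})\bigr)\,\mathrm{d}s$, and a disturbance term. For the averaging term I would use the identity~$\frac{\mathrm{d}}{\mathrm{d}s}\bigl[\frac{1}{\omega}V^a(\omega{s},\tilde{\xi}(s))\bigr]=\bigl(\tilde{Y}^a(\omega{s},\tilde{\xi})-\bar{Y}^a(\tilde{\xi})\bigr)+\frac{1}{\omega}(D_{\xi}V^a)(\omega{s},\tilde{\xi})\,\dot{\tilde{\xi}}$ and integrate; the boundary contribution is~$O(1/\omega)$, and since~$|\dot{\tilde{\xi}}|\leq M+eM$ on~$C'$, the remainder is of order~$(1+e)/\omega$, so the averaging error vanishes as~$\omega\to\infty$ for fixed~$e,\Delta$.

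The two assertions then differ only in the disturbance term. For~(a), $\bar{\xi}$ solves the undisturbed averaged system, so the disturbance contributes additively as~$\int_{t_0}^{t}\hat{Z}(\tilde{\xi},w)\,\mathrm{d}s$, bounded by~$eM\Delta$; choosing~$e$ small and~$\omega$ large and invoking Gronwall with the Lipschitz term yields~$\phi\leq\delta$, matching \Cref{def:03}. For~(b), $\bar{\xi}$ solves the disturbed averaged system with the same~$w$, so by the cancellation noted above the disturbance enters only as the difference~$\int_{t_0}^{t}w^k\bigl(Z_k(\tilde{\xi})-Z_k(\bar{\xi})\bigr)\,\mathrm{d}s$, which is Lipschitz in~$\phi$ with constant~$eL$; here~$e$ is arbitrary but fixed, so the Gronwall factor~$\exp((L+eL)\Delta)$ is finite and the vanishing averaging error still dominates, giving~$\phi\leq\delta$ for~$\omega\geq\omega_0(C,\delta,\Delta,e)$ as required by \Cref{def:04}.

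A standard continuity argument closes the remaining gap: one runs the above estimates under the a~priori hypothesis that~$\tilde{\xi}(t)\in C'$, and since they force~$\phi\leq\delta$ with~$\delta$ chosen smaller than the distance from~$C$ to the boundary of~$C'$, the solution~$\tilde{\xi}$ cannot leave~$C'$ on~$[t_0,t_0+\Delta]$, so it is defined on the whole interval and the hypothesis is self-justifying. The main obstacle is the bookkeeping of how the averaging error and the Gronwall constant depend on the disturbance bound~$e$: one must verify that a large~$e$ only inflates these finite constants but never produces an error that fails to vanish as~$\omega\to\infty$. This is exactly what the cancellation of~$\hat{Z}$ between~$\tilde{\Sigma}^{a,\omega}$ and~$\bar{\Sigma}^a$ secures, and it is the feature that distinguishes the present construction from one in which the disturbance would be amplified by the dither signals.
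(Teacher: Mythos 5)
Your proposal is correct and follows essentially the same route the paper indicates: the paper omits the details and simply invokes a first-order averaging procedure (proof of Theorem~9.15 in~\cite{BulloBook}) together with the Gronwall lemma, which is exactly what you carry out, including the standard periodic-antiderivative integration-by-parts trick and the bootstrap keeping the solution in the enlarged compact set. Your structural observation that $\hat{Z}$ enters~\cref{eq:06} and~\cref{eq:08} verbatim (so that in case~(b) it contributes only a Lipschitz term with constant proportional to~$e$, absorbed into the Gronwall factor, while in case~(a) it contributes an additive $O(e)$ error) is precisely the mechanism the paper relies on, consistent with its discussion in \Cref{rem:05} of disturbances not being amplified by the dither signals.
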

Finally, we provide a more explicit formula for the averaged vector field~$\bar{Y}^a$. To state this formula, we define the iterated integral~$\overline{Uv}^{i,j}\in\mathbb{R}$ by
\begin{equation}\label{eq:09}
\overline{Uv}^{i,j} \ := \ \frac{1}{T}\int_{0}^{T}U^i(\tau)\,v^j(\tau)\mathrm{d}\tau
\end{equation}
for every~$i\in\{1,\ldots,l\}$ and every~$j\in\{1,\ldots,m\}$.
\begin{exmp}\label{exm:01}
Let~$\alpha,c\in\mathbb{R}_+^m$. Let~$\varpi\in\mathbb{N}^m$ have pairwise distinct entries. Let~$n=m$ and~$T=2\pi$. Define~$u,v\colon\mathbb{R}\to\mathbb{R}^m$ component-wise by
\begin{equation*}
u^i(\tau) \ := \ \alpha^i\,\varpi^i\,\cos(\varpi^i\tau), \qquad v^j(\tau) \ := \ c^j\,\sin(\varpi^j\tau).
\end{equation*}
Then, \Cref{ass:01} is satisfied and~\cref{eq:09} is given by
\begin{equation}\label{eq:10}
\overline{Uv}^{i,j} \ = \ \alpha^i\,c^j\,\delta^{i,j}/2
\end{equation}
for all~$i,j\in\{1,\ldots,m\}$, where~$\delta^{i,j}$ denotes the Kronecker delta of~$i$ and~$j$.
\end{exmp}
An expansion of~$(\Phi^{X^{\tau}}_a)^{\ast}$ around~$a=0$ (using Propo\-sition~3.85 in~\cite{BulloBook}) leads to the following formula for~$\bar{Y}^a$.
\begin{rem}\label{rem:04}
Suppose that \Cref{ass:01,ass:02,ass:03} are satisfied. Suppose that~$U$ is zero-mean. Then
\begin{equation}\label{eq:11}
\bar{Y}^a(\bar{\xi}) \ = \ \bar{Y}^0(\bar{\xi}) + \delta\bar{Y}^a(\bar{\xi})
\end{equation}
for every~$a\in\mathbb{R}_+$ and every~$\bar{\xi}\in\text{\sffamily{\upshape{M}}}$, where\addtocounter{equation}{+1}\refstepcounter{equation}\label{eq:13}\addtocounter{equation}{-2}
\begin{align}
\bar{Y}^0(\bar{\xi}) & \ := \ Y_0(\bar{\xi}) + \overline{Uv}^{i,j}[X_i,Y_j](\bar{\xi}), \label{eq:12}\allowdisplaybreaks \\
\delta\bar{Y}^a(\bar{\xi}) & \ := \ \frac{a}{T}\int_0^T\int_0^1(1-s)\,U^{i_1}(\tau)\,U^{i_2}(\tau) \stepcounter{equation}\nonumber\tag{\arabic{equation}a}\allowdisplaybreaks \\
& \!\!\!\!\!\!\!\!\!\!\times\Big(a\,((\Phi^{X^{\tau}}_{sa})^{\ast}[X_{i_1}[X_{i_2},Y_0]])(\bar{\xi}) \nonumber\tag{\arabic{equation}b}\allowdisplaybreaks \\
&  + v^j(\tau)\,((\Phi^{X^{\tau}}_{sa})^{\ast}[X_{i_1}[X_{i_2},Y_j]])(\bar{\xi})\Big)\mathrm{d}s\,\mathrm{d}\tau. \nonumber\tag{\arabic{equation}c}
\end{align}
Note that the \emph{main part}~$\bar{Y}^0$ of~$\bar{Y}^a$ in~\cref{eq:12} is the same as the averaged vector field in~\cite{Duerr2013}. The \emph{remainder vector field}~$\delta\bar{Y}^a$ in~\cref{eq:13} vanishes as~$a\to0$.
\end{rem}


\section{Extremum Seeking Control}\label{sec:05}
Throughout this section, we suppose that
\begin{itemize}
	\item~$\text{\sffamily{\upshape{X}}}$ is a closed submanifold of a Euclidean space,
	\item~$F_0,F_1,\ldots,F_m\in\mathfrak{X}(\text{\sffamily{\upshape{X}}})$, $m\in\mathbb{N}$,
	\item~$\psi\in\mathfrak{F}(\text{\sffamily{\upshape{X}}})$.
\end{itemize}
In the \hyperlink{sumConvention}{summation convention} of \Cref{sec:02}, we consider a multiple-input single-output system on~$\text{\sffamily{\upshape{X}}}$ of the form
\begin{align}
\dot{x}(t) & \ = \ F_0(x(t)) + \mathrm{u}^i\,F_i(x(t)), \label{eq:14} \\
\mathrm{y} & \ = \ \psi(x(t)), \nonumber
\end{align}
where~$\mathrm{u}$ is an~$m$-component vector of real-valued input channels~$\mathrm{u}^1,\ldots,\mathrm{u}^m$ and~$\mathrm{y}$ is a real-valued output channel. We are interested in an output-feedback law that stabilizes the closed-loop system around states where~$\psi$ attains a extreme value. Since our approach is closely related to~\cite{Zhang20071,Zhang20072}, we follow the convention therein and focus on \emph{maxima} of~$\psi$. It is assumed that, at any time~$t\in\mathbb{R}$, a measurement of~$\mathrm{y}$ results in a noise-corrupted value
\begin{equation*}
\hat{y}(t) \ = \ \psi(x(t)) + d_{\mathrm{y}}(t),
\end{equation*}
where~$d_{\mathrm{y}}\in{L_{\infty}}$ is an unknown output disturbance.

As in \Cref{sec:04}, we choose oscillatory dither signals~$u,v\in{L_{\infty}^m}$ such that \Cref{ass:01} is satisfied. For instance, we can use the sinusoids in \Cref{exm:01}. We consider the time-dependent output-feedback control law
\begin{equation}\label{eq:15}
\mathrm{u} \ = \ a\,\omega\,u(\omega{t}) + \frac{1}{a}\,v(\omega{t})\,\big(\hat{y}(t)-\eta(t)\big) + d_{\mathrm{u}}(t)
\end{equation}
with control parameters~$a,\omega\in\mathbb{R}_+$, where~$d_{\mathrm{u}}\in{L_{\infty}^m}$ is a vector of input disturbances and~$\eta$ is the real-valued state variable of a high-pass filter
\begin{equation}\label{eq:16}
\dot{\eta}(t) \ = \ -h\,\eta(t) + h\,\hat{y}(t)
\end{equation}
with gain~$h\in\mathbb{R}_+$ to remove a possible offset from~$\hat{y}(t)$. The control scheme is depicted in \Cref{fig:ESCScheme}.
\begin{figure}%
\centering\includegraphics{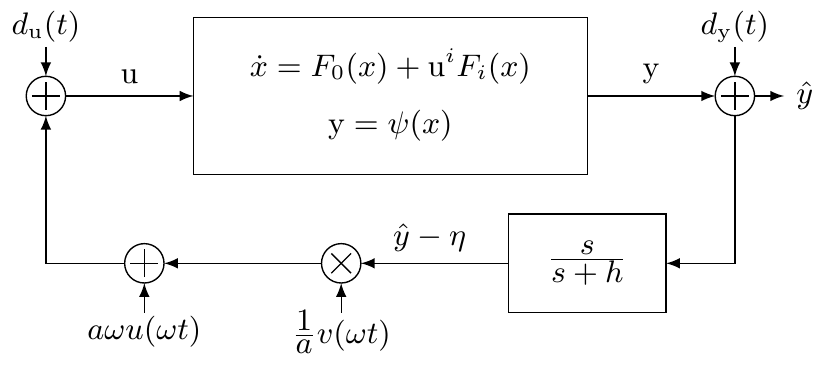}%
\caption{Sketch of a large-amplitude, high-frequency extremum seeking control scheme. The closed-loop system is described by equation~\cref{eq:20}.}%
\label{fig:ESCScheme}%
\end{figure}
\begin{rem}\label{rem:05}
Control law \Cref{eq:15} is studied in~\cite{Zhang20071,Zhang20072} for~$a=1$ and in~\cite{Duerr2013} for~$a=1/\sqrt{\omega}$, but, in any case, without taking disturbances into account. The choice of the parameter~$a$ is decisive for robustness in the large-amplitude, high-frequency limit~$\omega\to\infty$. On the one hand, a sufficiently large value of~$\omega$ is necessary to guarantee a good approximation of the averaged system. On the other hand, if~$a=1/\sqrt{\omega}$ as in~\cite{Duerr2013}, then the noise-corrupted signal~$\hat{y}-\eta$ is amplified by the dither signal~$t\mapsto\sqrt{\omega}\,v(\omega{t})$, which can lead to a complete loss of stability when~$\omega$ is large (see, e.g., \Cref{fig:unicycle}~\hyperlink{unicycleBR}{(d)} in \Cref{sec:06}). There is, however, no amplification of disturbances if~$a=1$ as in~\cite{Zhang20071,Zhang20072}. This very simple but crucial difference in the choice of~$a$ allows us to prove robustness of the closed-loop system for fixed~$a\in\mathbb{R}_+$ in the large-amplitude, high-frequency limit~$\omega\to\infty$. The additive dither signal~$t\mapsto{a\omega{u(\omega{t})}}$ certainly leads to strong oscillations of the system state~$x$, but the amplitudes are uniformly bounded with respect to~$\omega$.
\end{rem}
When we insert~\cref{eq:15} into~\cref{eq:14}, then we get the closed-loop system\refstepcounter{equation}\label{eq:17}
\begin{align}
& \dot{x}(t) \ = \ F_0(x(t)) + \big(a\omega\,u^i(\omega{t})+ d_{\mathrm{u}}^i(t)\big)\,F_i(x(t)) \nonumber\tag{\arabic{equation}a} \\
& \quad + \frac{1}{a}\,v^j(\omega{t})\,\big(\psi(x(t))+d_{\mathrm{y}}(t)-\eta(t)\big)\,F_j(x(t)), \nonumber\tag{\arabic{equation}b} \\
& \dot{\eta}(t) \ = \ -h\,\eta(t) + h\,\psi(x(t)) + h\,d_{\mathrm{y}}(t) \label{eq:18}
\end{align}
on the product manifold~$\text{\sffamily{\upshape{M}}}:=\text{\sffamily{\upshape{X}}}\times\mathbb{R}$. Now we are in a particular situation of \Cref{sec:04}. To make this apparent, we define the positive integers~$l:=m$, $n:=m+1$, and smooth vector fields~$X_i$, $Y_j$, $Z_k$ on~$\text{\sffamily{\upshape{M}}}$ by
\begin{align*}
X_i(\xi) & \ := \ Z_i(\xi) \ := \ \begin{bmatrix} F_i(x) \\ 0 \end{bmatrix}, \qquad i\in\{1,\ldots,m\}, \allowdisplaybreaks \\
Y_i(\xi) & \ := \ (\psi(x)-\eta)X_i(\xi), \qquad i\in\{1,\ldots,m\}, \allowdisplaybreaks \\
Y_0(\xi) & \ := \ \begin{bmatrix} F_0(x) \\ -h\,\eta + h\,\psi(x) \end{bmatrix}, \qquad Z_n(\xi) \ := \ \begin{bmatrix} 0 \\ h \end{bmatrix}
\end{align*}
for every~$\xi=[x,\eta]\in\text{\sffamily{\upshape{M}}}$.
\begin{assum}\label{ass:04}
The vector fields~$F_1,\ldots,F_m$ are complete and commute pairwise.
\end{assum}
Suppose that \Cref{ass:01,ass:04} are satisfied. Then, it is easy to check that also \Cref{ass:02,ass:03} are satisfied and therefore all definitions and statements in \Cref{sec:04} apply to the specific problem in this section; in particular, the definitions of~$\Sigma^{a,\omega}$ and~$\tilde{\Sigma}^{a,\omega}$ in~\cref{eq:03,eq:08}, respectively. For all~$a,\omega\in\mathbb{R}_+$ and every~$d=[d_{\mathrm{u}}^{\top},d_{\mathrm{y}}]^{\top}$ with~$d_{\mathrm{u}}\in\mathbb{R}^m$ and~$d_{\mathrm{y}}\in\mathbb{R}$, define time-dependent vector fields~$(t,\xi)\mapsto\Sigma^{a,\omega}_{\text{ES}}(t,\xi,d)$, $(t,\tilde{\xi})\mapsto\tilde{\Sigma}^{a,\omega}_{\text{ES}}(t,\tilde{\xi},d)$ on~$\text{\sffamily{\upshape{M}}}$ by
\begin{align*}
\Sigma^{a,\omega}_{\text{ES}}(t,\xi,d) & \ := \ \Sigma^{a,\omega}(t,\xi,w_{a,\omega{t},d}), \\
\tilde{\Sigma}^{a,\omega}_{\text{ES}}(t,\tilde{\xi},d) & \ := \ \tilde{\Sigma}^{a,\omega}(t,\tilde{\xi},w_{a,\omega{t},d}),
\end{align*}
where the components of~$w_{a,\omega{t},d}\in\mathbb{R}^n$ are given by
\begin{subequations}
\begin{align}
w_{a,\omega{t},d}^i & \ = \ d_{\mathrm{u}}^i + v^i(\omega{t})\,d_{\mathrm{y}}/a, \qquad i=1,\ldots,m, \label{eq:19A} \\
w_{a,\omega{t},d}^{n} & \ = \ d_{\mathrm{y}},
\end{align}
\end{subequations}
and the index ES stands for \textbf{e}xtremum \textbf{s}eeking.
\begin{rem}\label{rem:06}
For all~$a,\omega\in\mathbb{R}_+$ and every~$d=[d_{\mathrm{u}}^{\top},d_{\mathrm{y}}]^{\top}$ with~$d_{\mathrm{u}}\in{L_{\infty}^m}$ and~$d_{\mathrm{y}}\in{L_{\infty}}$, the system
\begin{equation}\label{eq:20}
\dot{\xi}(t) \ = \ \Sigma^{a,\omega}_{\text{ES}}(t,\xi(t),d(t))
\end{equation}
on~$\text{\sffamily{\upshape{M}}}$ coincides with the closed-loop system~\cref{eq:17},~\cref{eq:18}. Since~$\tilde{\Sigma}_{\text{ES}}^{a,\omega}$ is the pull-back of~$\Sigma_{\text{ES}}^{a,\omega}$ by~$\Phi^{X^{\omega{t}}}_a$, we call
\begin{equation}\label{eq:21}
\dot{\tilde{\xi}}(t) \ = \ \tilde{\Sigma}_{\text{ES}}^{a,\omega}(t,\tilde{\xi}(t),d(t))
\end{equation}
the \emph{pull-back system of~\cref{eq:20}}.
\end{rem}
For every~$\tau\in\mathbb{R}$, define~$F^{\tau}\in\mathfrak{X}(\text{\sffamily{\upshape{X}}})$ by
\begin{equation}\label{eq:22}
F^{\tau}(x) \ := \ U^i(\tau)\,F_i(x),
\end{equation}
where~$U\colon\mathbb{R}\to\mathbb{R}^m$ is defined by~\cref{eq:05}. Note that, if \Cref{ass:01,ass:04} are satisfied, then~$F^{\tau}$ is complete (cf. \Cref{rem:01}). As a direct consequence of \Cref{rem:03}, we get the subsequent connection between the solutions of~\cref{eq:20} and~\cref{eq:21}.
\begin{rem}\label{rem:07}
Suppose that \Cref{ass:01,ass:04} are satisfied. Then, for all~$a,\omega\in\mathbb{R}_+$, every~$d\in{L_{\infty}^n}$, every~$t_0\in\mathbb{R}$, and every~$\tilde{\xi}_0=[\tilde{x}_0,\tilde{\eta}_0]\in\text{\sffamily{\upshape{M}}}$, the maximal solution~$\xi=[x,\eta]\colon{I}\to\text{\sffamily{\upshape{M}}}$ of~\cref{eq:20} with initial condition~$\xi(t_0)=[\Phi^{F^{\omega{t_0}}}_a(\tilde{x}_0),\tilde{\eta}_0]$ and the maximal solution~$\tilde{\xi}=[\tilde{x},\tilde{\eta}]\colon\tilde{I}\to\text{\sffamily{\upshape{M}}}$ of~\cref{eq:21} with initial condition~$\tilde{\xi}(t_0)=\tilde{\xi}_0$ are related by the change of coordinates
\begin{equation}\label{eq:23}
x(t) \ = \ \Phi^{F^{\omega{t}}}_a(\tilde{x}(t)), \qquad \eta(t) \ = \ \tilde{\eta}(t)
\end{equation}
for every~$t\in{I=\tilde{I}}$.
\end{rem}
In the next step, we provide a formula for the averaged vector field~$\bar{\xi}\mapsto\bar{\Sigma}^a(\bar{\xi},w)$ in~\cref{eq:08} under the following additional assumption.
\begin{assum}\label{ass:05}
$U$ is zero-mean.
\end{assum}
For the rest of this section, we suppose that \Cref{ass:01,ass:04,ass:05} are satisfied. We already know from \Cref{rem:04} how to compute the averaged vector field~$\bar{Y}^{a}$. By~\cref{eq:11}, it is the sum of the main part~$\bar{Y}^0$ in~\cref{eq:12} and the remainder vector field~$\delta\bar{Y}^a$ in~\cref{eq:13}. Note that, because of \Cref{ass:04}, we have
\begin{equation}\label{eq:24}
[F_i,\psi{F_j}](\bar{x}) \ = \ (F_i\psi)(\bar{x})\,F_j(\bar{x})
\end{equation}
for all~$i,j\in\{1,\ldots,m\}$ and every~$\bar{x}\in\text{\sffamily{\upshape{X}}}$, where~$F_{i}\psi$ denotes the Lie derivative of~$\psi$ along~$F_i$. Using~\cref{eq:24}, a direct computation reveals that~\cref{eq:12} and~\cref{eq:13} reduce to
\begin{equation*}
\bar{Y}^0(\bar{\xi}) \ = \ \begin{bmatrix} \bar{G}^0(\bar{x}) \\ -h\,\bar{\eta}+h\,\psi(\bar{x}) \end{bmatrix} \quad \text{and} \quad \delta\bar{Y}^a(\bar{\xi}) \ = \ \begin{bmatrix} \delta\bar{G}^a(\bar{x}) \\ \delta\bar{g}^a(\bar{x}) \end{bmatrix}
\end{equation*}
for every~$a\in\mathbb{R}_+$ and every~$\bar{\xi}=[\bar{x},\bar{\eta}]\in\text{\sffamily{\upshape{M}}}$, where
\begin{equation}\label{eq:25}
\bar{G}^0(\bar{x}) \ := \ F_0(\bar{x}) + \overline{Uv}^{i,j}\,(F_{i}\psi)(\bar{x})\,F_j(\bar{x})
\end{equation}
with coefficients~$\overline{Uv}^{i,j}\in\mathbb{R}$ as in~\cref{eq:09},
\begin{subequations}\label{eq:26}
\begin{align}
& \delta\bar{G}^{a}(\bar{x}) \ := \ \frac{a}{T}\int_0^T\int_0^1(1-s)\,U^{i_1}(\tau)\,U^{i_2}(\tau \allowdisplaybreaks) \\
& \quad\times\Big(a\big((\Phi^{F^{\tau}}_{sa})^{\ast}[F_{i_1},[F_{i_2},F_0]]\big)(\bar{x}) \allowdisplaybreaks \\
& \quad+v^j(\tau)\,(F_{i_1}(F_{i_2}\psi))(\Phi^{F^{\tau}}_{sa}(\bar{x}))\,F_j(\bar{x})\Big)\,\mathrm{d}s\,\mathrm{d}\tau
\end{align}
\end{subequations}
with~$F^{\tau}$ as in~\cref{eq:22}, and
\begin{align*}
\delta\bar{g}^{a}(\bar{x}) & \ := \ h\,\frac{a}{T}\int_0^T\int_0^1(1-s)\,U^{i_1}(\tau)\,U^{i_2}(\tau) \\
& \qquad \times{a}\,(F_{i_1}(F_{i_2}\psi))(\Phi^{F^{\tau}}_{sa}(\bar{x}))\,\mathrm{d}s\,\mathrm{d}\tau.
\end{align*}
\begin{rem}\label{rem:08}
The tangent vector~$\bar{G}^0(\bar{x})$ in~\cref{eq:25} contains valuable information about ascent directions of~$\psi$ at~$\bar{x}$. To see this, suppose that the oscillatory signals~$u,v$ are chosen as in \Cref{exm:01}. Then, we obtain from~\cref{eq:10} that the right-hand side of~\cref{eq:25} contains a linear combination of vectors of the form
\begin{equation}\label{eq:27}
[F_i,\psi{F_i}](\bar{x}) \ = \ (F_i\psi)(\bar{x})\,F_i(\bar{x})
\end{equation}
with positive linear coefficients. Note that the vector in~\cref{eq:27} points into an ascent direction of~$\psi$ at~$\bar{x}$ if~$(F_i\psi)(\bar{x})\neq0$. The purpose of the extremum scheme in \Cref{fig:ESCScheme} is to steer the control system into these potential ascent directions; i.e., towards a maximum of~$\psi$.
\end{rem}
Next, we insert the above formulas for~$\bar{Y}^0$ and~$\delta\bar{Y}^a$ into~\cref{eq:08} and~\cref{eq:11} to obtain~$\bar{\Sigma}^a$. For every~$a\in\mathbb{R}_+$, every~$w=[\bar{w}^{\top},w^n]^{\top}\in\mathbb{R}^n$ with~$\bar{w}\in\mathbb{R}^m$, $w^n\in\mathbb{R}$, and every~$\bar{\xi}=[\bar{x},\bar{\eta}]\in\text{\sffamily{\upshape{M}}}$, we get
\begin{equation}\label{eq:28}
\bar{\Sigma}^a(\bar{\xi},w) \ = \ \begin{bmatrix} \bar{\Sigma}^a_{\text{\sffamily{\upshape{X}}}}(\bar{x},\bar{w}) \\ \bar{\Sigma}^a_{\mathbb{R}}(\bar{\eta},\bar{x},w^n) \end{bmatrix},
\end{equation}
where the components~$\bar{\Sigma}^a_{\text{\sffamily{\upshape{X}}}}$ and~$\bar{\Sigma}^a_{\mathbb{R}}$ are given by
\begin{align}
\bar{\Sigma}^a_{\text{\sffamily{\upshape{X}}}}(\bar{x},\bar{w}) & \ := \ \bar{G}^0(\bar{x}) + \delta\bar{G}^a(\bar{x}) + \bar{w}^k\,F_k(\bar{x}), \label{eq:29} \\
\bar{\Sigma}^a_{\mathbb{R}}(\bar{\eta},\bar{x},w^n) & \ := -h\,\bar{\eta} + h\,\psi(\bar{x}) + \delta\bar{g}^a(\bar{x}) + h\,w^n. \label{eq:30}
\end{align}
Note that the dependence of~$\bar{\Sigma}^a_{\text{\sffamily{\upshape{X}}}}$ on the high-pass filter state~$\bar{\eta}$ drops out in the averaging process. Therefore, it is justified to refer to the system
\begin{equation}\label{eq:31}
\dot{\bar{x}}(t) \ = \ \bar{\Sigma}^a_{\text{\sffamily{\upshape{X}}}}(\bar{x}(t),\bar{w}(t))
\end{equation}
on~$\text{\sffamily{\upshape{X}}}$ as the \emph{averaged system of~\cref{eq:14} under~\cref{eq:15}}. Moreover, the independence of~$\bar{\Sigma}^a_{\text{\sffamily{\upshape{X}}}}$ on~$\bar{\eta}$ gives~\cref{eq:28} a cascade structure as indicated in \Cref{fig:cascade}.
\begin{figure}%
\centering\includegraphics{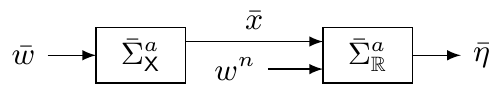}%
\caption{Representation of~$\bar{\Sigma}^a$ in~\cref{eq:28} as a cascade of~$\bar{\Sigma}^a_{\text{\sffamily{\upshape{X}}}}$ in~\cref{eq:29} and~$\bar{\Sigma}^a_{\mathbb{R}}$ in~\cref{eq:30}.}%
\label{fig:cascade}%
\end{figure}
Note that~$\dot{\bar{\eta}}=\bar{\Sigma}^a_{\mathbb{R}}(\bar{\eta},\bar{x},w^n)$ is ISS with~$(\bar{x},w^n)$ as inputs. Standard ISS results for cascade systems (see, e.g., Proposition~3.2 in~\cite{Jiang1994} and Remark~2.7 in~\cite{Sontag1995}) provide the following implications.
\begin{prop}\label{prop:03}
Suppose that \Cref{ass:01,ass:04,ass:05} are satisfied. Fix~$a\in\mathbb{R}_+$ and let~$K\subset\text{\sffamily{\upshape{X}}}$ be nonempty and compact. Then, the following implications hold:
\begin{enumerate}[label=(\alph*)]
	\item If~$\bar{\Sigma}^a_{\text{\sffamily{\upshape{X}}}}$ is 0-GAS w.r.t.~$K$, then there exists some nonempty, compact~$\Lambda\subset\mathbb{R}$ such that~$\bar{\Sigma}^a$ is 0-GAS w.r.t.~$K\times\Lambda$.
	\item If~$\bar{\Sigma}^a_{\text{\sffamily{\upshape{X}}}}$ is ISS w.r.t.~$K$, then there exists some nonempty, compact~$\Lambda\subset\mathbb{R}$ such that~$\bar{\Sigma}^a$ is ISS w.r.t.~$K\times\Lambda$.
\end{enumerate}
\end{prop}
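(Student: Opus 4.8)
The plan is to exploit the cascade structure of~$\bar{\Sigma}^a$ exhibited in~\cref{eq:28} and \Cref{fig:cascade}, and to reduce both claims to standard input-to-state stability results for cascades, such as those in~\cite{Jiang1994,Sontag1995}. The driving subsystem is the~$\bar{x}$-dynamics~\cref{eq:31}, whose right-hand side~$\bar{\Sigma}^a_{\text{\sffamily{\upshape{X}}}}$ in~\cref{eq:29} does not depend on~$\bar{\eta}$; the driven subsystem is the scalar high-pass-filter state~$\bar{\eta}$ governed by~$\bar{\Sigma}^a_{\mathbb{R}}$ in~\cref{eq:30}. The whole argument hinges on choosing the attractor~$\Lambda$ for~$\bar{\eta}$ correctly and on certifying that the driven subsystem is ISS with the~$\bar{x}$-channel entering in an admissible way.

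First I would fix the compact target set~$\Lambda$. Since~\cref{eq:30} is affine in~$\bar{\eta}$ with strictly stable linear part~$-h\bar{\eta}$ (recall~$h\in\mathbb{R}_+$), its equilibrium value for frozen~$\bar{x}$ and~$w^n=0$ is~$\phi(\bar{x}):=\psi(\bar{x})+\delta\bar{g}^a(\bar{x})/h$. I would therefore set~$\Lambda:=\phi(K)$, the set of equilibrium values of~$\bar{\eta}$ as~$\bar{x}$ ranges over~$K$. Because~$\psi$ and~$\delta\bar{g}^a$ are smooth and~$K$ is compact,~$\phi$ is continuous and~$\Lambda$ is a nonempty compact subset of~$\mathbb{R}$; by construction~$|\phi(\bar{x})|_\Lambda=0$ for every~$\bar{x}\in K$. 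Rewriting~\cref{eq:30} as~$\dot{\bar{\eta}}=-h(\bar{\eta}-\phi(\bar{x}))+hw^n$ and integrating the scalar linear equation, the factor~$e^{-h(t-t_0)}$ produces a~$\mathcal{K}\mathcal{L}$-term in~$|\bar{\eta}(t_0)|_\Lambda$, while the forcing contributes a term controlled by~$\sup_s|\phi(\bar{x}(s))|_\Lambda$ and a term~$\gamma_w(\|w^n\|)$ with~$\gamma_w\in\mathcal{K}$. To turn~$\sup_s|\phi(\bar{x}(s))|_\Lambda$ into an admissible~$\mathcal{K}$-gain in~$|\bar{x}|_K$, I would use that~$\text{\sffamily{\upshape{X}}}$ is a closed submanifold of a Euclidean space, so each set~$\{\bar{x}\in\text{\sffamily{\upshape{X}}}:|\bar{x}|_K\le R\}$ is compact; hence~$R\mapsto\sup_{|\bar{x}|_K\le R}|\phi(\bar{x})|_\Lambda$ is finite and nondecreasing and vanishes at~$R=0$, and may be majorized by some~$\gamma_1\in\mathcal{K}_\infty$. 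This establishes that the~$\bar{\eta}$-subsystem is ISS w.r.t.~$\Lambda$ with~$(\bar{x},w^n)$ as inputs, the~$\bar{x}$-channel measured through~$|\bar{x}|_K$.

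Finally I would assemble the cascade. For part~(a), setting~$w\equiv0$ makes~$\bar{x}(t)\to K$ by 0-GAS of~\cref{eq:31}, so the input to the ISS~$\bar{\eta}$-subsystem is asymptotically vanishing and~$\bar{\eta}(t)\to\Lambda$; combined with the identity~$|[\bar{x},\bar{\eta}]|_{K\times\Lambda}^2=|\bar{x}|_K^2+|\bar{\eta}|_\Lambda^2$ this yields a single~$\mathcal{K}\mathcal{L}$-estimate, i.e.\ 0-GAS w.r.t.~$K\times\Lambda$ in the sense of \Cref{def:01}. For part~(b), ISS of~\cref{eq:31} w.r.t.~$K$ with input~$\bar{w}$ together with the ISS property of the driven subsystem lets me invoke the cascade ISS theorem to obtain an estimate~$|[\bar{x},\bar{\eta}](t)|_{K\times\Lambda}\le\bar{\beta}(|[\bar{x}_0,\bar{\eta}_0]|_{K\times\Lambda},t)+\bar{\gamma}(\|w\|)$ as in \Cref{def:02}, using that the component norms~$\|\bar{w}\|$ and~$\|w^n\|$ of~$w=[\bar{w}^\top,w^n]^\top$ are each dominated by~$\|w\|$.

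The step I expect to be the main obstacle is the second one: producing a genuine global~$\mathcal{K}$-gain bounding~$|\phi(\bar{x})|_\Lambda$ by a function of~$|\bar{x}|_K$ over the entire (possibly noncompact) manifold, rather than only on compact subsets. This is exactly where properness of the distance function on the closed submanifold~$\text{\sffamily{\upshape{X}}}$ is essential, and where one must ensure the gain is globally defined so that the \emph{global} conclusions of \Cref{def:01,def:02}—not merely semi-global analogues—are justified. Once this gain is in hand, both implications follow from the standard set-version cascade arguments of~\cite{Jiang1994,Sontag1995}.
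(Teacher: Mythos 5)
Your overall route is exactly the paper's: the paper proves \Cref{prop:03} by observing the cascade structure of~\cref{eq:28} (the~$\bar{x}$-subsystem~\cref{eq:29} does not depend on~$\bar{\eta}$, while the scalar filter~\cref{eq:30} is a stable first-order system driven by~$\bar{x}$ and~$w^n$) and then invoking the standard cascade ISS results of~\cite{Jiang1994,Sontag1995}; your proposal simply fills in the details behind that citation. In particular, your construction of a class-$\mathcal{K}$ gain majorizing~$R\mapsto\sup_{|\bar{x}|_K\le R}|\phi(\bar{x})|_\Lambda$ via compactness of the sets~$\{\bar{x}\in\text{\sffamily{\upshape{X}}}:|\bar{x}|_K\le R\}$ is sound, and it is the right way to make precise the paper's loose remark that the~$\bar{\eta}$-subsystem ``is ISS with~$(\bar{x},w^n)$ as inputs.''

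There is, however, one step that fails as written: the choice~$\Lambda:=\phi(K)$, $\phi:=\psi+\delta\bar{g}^a/h$. If~$\phi(K)$ is disconnected, the driven subsystem is \emph{not} ISS w.r.t.~$\Lambda$, and the final conclusions fail for this~$\Lambda$ as well. Concretely, suppose~$K=K_1\cup K_2$ with~$\phi\equiv0$ on~$K_1$ and~$\phi\equiv1$ on~$K_2$, so~$\Lambda=\{0,1\}$; take~$w\equiv0$, $\bar{x}_0\in K_1$, and~$\bar{\eta}_0=1\in\Lambda$, so that~$|[\bar{x}_0,\bar{\eta}_0]|_{K\times\Lambda}=0$. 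Under 0-GAS of~$\bar{\Sigma}^a_{\text{\sffamily{\upshape{X}}}}$ the trajectory satisfies~$|\bar{x}(t)|_K\le\bar{\beta}(0,t)=0$, so~$\bar{x}(t)$ stays in~$K$ and~$\phi(\bar{x}(t))\equiv0$ by continuity; then~$\bar{\eta}(t)=e^{-ht}$ passes through~$1/2$, which is at distance~$1/2$ from~$\{0,1\}$, contradicting the estimate~$|[\bar{x}(t),\bar{\eta}(t)]|_{K\times\Lambda}\le\bar{\beta}(0,t)=0$ required by \Cref{def:01} (and likewise the ISS estimate of \Cref{def:02} with zero input). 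The repair is one line: take~$\Lambda:=[c_-,c_+]$ to be the smallest closed interval containing~$\phi(K)$. Then~$V(\bar{\eta}):=|\bar{\eta}|_\Lambda$ satisfies
\begin{equation*}
\dot{V} \ \le \ -h\,V + h\,\big(|\phi(\bar{x})|_\Lambda + |w^n|\big),
\end{equation*}
since, e.g., for~$\bar{\eta}>c_+$ one has~$\dot{V}=-h(\bar{\eta}-c_+)+h(\phi(\bar{x})-c_+)+h\,w^n$ and~$\phi(\bar{x})-c_+\le|\phi(\bar{x})|_\Lambda$ (symmetrically for~$\bar{\eta}<c_-$). With this~$\Lambda$, your gain~$\gamma_1$ bounds~$|\phi(\bar{x})|_\Lambda$ by~$\gamma_1(|\bar{x}|_K)$ exactly as before, and the cascade argument goes through. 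Note the corner case is genuinely within the scope of the statement: if~$\text{\sffamily{\upshape{X}}}$ is connected, 0-GAS or ISS w.r.t. a disconnected~$K$ cannot occur, so your choice is then automatically an interval; but the proposition permits disconnected~$\text{\sffamily{\upshape{X}}}$ and~$K$, and there the convex hull is needed.
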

As a consequence of \Cref{prop:01,prop:02,prop:03}, we obtain the following result.
\begin{thm}\label{thm:01}
Suppose that \Cref{ass:01,ass:04,ass:05} are satisfied. Fix~$a\in\mathbb{R}_+$ and let~$K\subset\text{\sffamily{\upshape{X}}}$ be nonempty and compact. Then, the following implications hold:
\begin{enumerate}[label=(\alph*)]
	\item If~$\bar{\Sigma}^a_{\text{\sffamily{\upshape{X}}}}$ is 0-GAS w.r.t.~$K$, then there exists some nonempty, compact~$\Lambda\subset\mathbb{R}$ such that~$(\tilde{\Sigma}^{a,\omega}_{\text{ES}})_\omega$ is small-disturbance SGPUAS w.r.t~$K\times\Lambda$.
	\item If~$\bar{\Sigma}^a_{\text{\sffamily{\upshape{X}}}}$ is ISS w.r.t.~$K$, then there exists some nonempty, compact~$\Lambda\subset\mathbb{R}$ such that~$(\tilde{\Sigma}^{a,\omega}_{\text{ES}})_\omega$ is large-disturbance SGPUAS w.r.t.~$K\times\Lambda$.
\end{enumerate}
\end{thm}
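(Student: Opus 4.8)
The plan is to chain the three preceding propositions and then close the gap between the auxiliary family~$(\tilde{\Sigma}^{a,\omega})_{\omega}$, for which everything has been prepared, and the extremum-seeking family~$(\tilde{\Sigma}^{a,\omega}_{\text{ES}})_{\omega}$ that actually appears in the statement. First I would treat part~(a). Assuming~$\bar{\Sigma}^a_{\text{\sffamily{\upshape{X}}}}$ is 0-GAS w.r.t.~$K$, \Cref{prop:03}~(a) produces a nonempty compact~$\Lambda\subset\mathbb{R}$ with~$\bar{\Sigma}^a$ being 0-GAS w.r.t.~$K\times\Lambda$ on the product manifold~$\text{\sffamily{\upshape{M}}}=\text{\sffamily{\upshape{X}}}\times\mathbb{R}$. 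Since \Cref{prop:02}~(a) gives that~$(\tilde{\Sigma}^{a,\omega})_{\omega}$ is a small-disturbance approximation of~$\bar{\Sigma}^a$, \Cref{prop:01}~(a) yields that~$(\tilde{\Sigma}^{a,\omega})_{\omega}$ is small-disturbance SGPUAS w.r.t.~$K\times\Lambda$; that is, there is a fixed~$\tilde{\beta}\in\mathcal{K}\mathcal{L}$ such that for all~$\rho,\nu\in\mathbb{R}_+$ there exist~$\omega_0,e\in\mathbb{R}_+$ delivering the bound in \Cref{def:05} for the solutions of~\cref{eq:02} with~$\tilde{\Sigma}=\tilde{\Sigma}^{a,\omega}$ and arbitrary~$w\in{L_{\infty}^n}$ with~$\|w\|\le{e}$. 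Part~(b) is handled identically, replacing 0-GAS by ISS and invoking the (b)-parts of \Cref{prop:03,prop:02,prop:01}, which furnish a fixed pair~$\tilde{\beta}\in\mathcal{K}\mathcal{L}$, $\tilde{\gamma}\in\mathcal{K}$ as in \Cref{def:06}.

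The remaining and genuinely new step is to transfer these estimates from the input~$w$ to the extremum-seeking input~$d=[d_{\mathrm{u}}^{\top},d_{\mathrm{y}}]^{\top}$. For fixed~$a,\omega\in\mathbb{R}_+$ and~$d$ with~$d_{\mathrm{u}}\in{L_{\infty}^m}$, $d_{\mathrm{y}}\in{L_{\infty}}$, the map~$t\mapsto{w_{a,\omega{t},d(t)}}$ defined through~\cref{eq:19A} belongs to~$L_{\infty}^n$; denote it by~$w_d^{a,\omega}$. By the very definition of~$\tilde{\Sigma}^{a,\omega}_{\text{ES}}$, every solution of~\cref{eq:21} is a solution of~\cref{eq:02} with~$\tilde{\Sigma}=\tilde{\Sigma}^{a,\omega}$ and disturbance~$w_d^{a,\omega}$, so the estimates above apply verbatim once we control~$\|w_d^{a,\omega}\|$. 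Because~$v$ is~$T$-periodic and essentially bounded, \cref{eq:19A} gives, for the fixed value of~$a$, the $\omega$-uniform bound~$\|w_d^{a,\omega}\|\le{C_a}\,\|d\|$ with~$C_a:=\sqrt{m(1+\|v\|/a)^2+1}$, a constant that depends on~$a$ and~$v$ but \emph{not} on~$\omega$ or~$t_0$.

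With this bound in hand the conclusion follows. For part~(a), given~$\rho,\nu$ I take the~$\omega_0,e$ produced above and set~$e_{\text{ES}}:=e/C_a$; then~$\|d\|\le{e_{\text{ES}}}$ forces~$\|w_d^{a,\omega}\|\le{e}$, so the small-disturbance estimate applies with~$w=w_d^{a,\omega}$ and yields exactly the bound of \Cref{def:05} for~$(\tilde{\Sigma}^{a,\omega}_{\text{ES}})_{\omega}$ w.r.t.~$K\times\Lambda$ with the same~$\tilde{\beta}$. For part~(b) I define~$\tilde{\gamma}_{\text{ES}}(s):=\tilde{\gamma}(C_a\,s)\in\mathcal{K}$; given~$\rho,\nu,e_{\text{ES}}$ I apply the large-disturbance property at~$e:=C_a\,e_{\text{ES}}$ to obtain~$\omega_0$, and monotonicity of~$\tilde{\gamma}$ gives~$\tilde{\gamma}(\|w_d^{a,\omega}\|)\le\tilde{\gamma}(C_a\|d\|)=\tilde{\gamma}_{\text{ES}}(\|d\|)$, establishing the bound of \Cref{def:06}. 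The main obstacle is precisely this substitution step: one must verify that the~$\omega$-dependent input~$w_d^{a,\omega}$ admits a norm bound that is uniform in~$\omega$. This is exactly the point at which the choice of a \emph{fixed}~$a$ (rather than~$a=1/\sqrt{\omega}$) is essential, since otherwise the factor~$1/a$ multiplying~$v(\omega{t})\,d_{\mathrm{y}}$ would blow up with~$\omega$ and no uniform gain could be salvaged.
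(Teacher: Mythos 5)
Your proposal is correct and follows essentially the same route as the paper: the paper derives \Cref{thm:01} precisely by chaining \Cref{prop:03}, \Cref{prop:02}, and \Cref{prop:01}, leaving the passage from the generic disturbance~$w$ to the extremum-seeking disturbance~$d$ implicit. Your explicit $\omega$-uniform bound~$\|w_d^{a,\omega}\|\leq C_a\|d\|$ via~\cref{eq:19A} is exactly the detail the paper glosses over (and alludes to in \Cref{rem:05} as the ``no amplification'' property of fixed~$a$), so you have filled the gap correctly rather than diverged from the paper's argument.
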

Note that \Cref{thm:01} provides robust stability properties of the pull-back system~\cref{eq:21}. The change of coordinates in~\cref{eq:23} then allows us to draw conclusions about the solutions of closed-loop system~\cref{eq:20}.


\section{Examples}\label{sec:06}
To illustrate the approach in the previous section, we consider two particular examples of the control system~\cref{eq:14}: a single integrator (\Cref{sec:0601}) and a kinematic unicycle (\Cref{sec:0602}). Because of \Cref{thm:01,rem:07}, we may predict the behavior of the closed-loop system~\cref{eq:20} if we can prove that the averaged system~\cref{eq:31} is 0-GAS or ISS.

A proof of stability for the averaged system~\cref{eq:31} naturally requires suitable assumptions on the objective function~$\psi$. It turns out that we can use the same assumptions for both the single integrator and the kinematic unicycle. In each case, the smooth real-valued objective function~$\psi$ is defined on a Euclidean space~$\mathbb{R}^N$ of appropriate dimension~$N\in\mathbb{N}$. Before we state the assumptions on~$\psi$, we introduce the following notation. For every~$y\in\mathbb{R}$, let~$\psi^{-1}(\geq{y})$ denote the \emph{$y$-superlevel set of~$\psi$}; i.e., the (possibly empty) set of all~$p\in\mathbb{R}^{N}$ with~$\psi(p)\geq{y}$. For every~$p\in\mathbb{R}^N$, let~$\nabla\psi(p)$ denote the \emph{gradient vector} of~$\psi$ at~$p$, let~$\nabla^2\psi(p)$ denote the \emph{Hessian matrix} of~$\psi$ at~$p$, and let~$|\nabla^2\psi(p)|$ denote the \emph{operator norm} of~$\nabla^2\psi(p)$ induced by the Euclidean norm. For given~$p_\ast\in\mathbb{R}^N$, we state the subsequent conditions, which will ensure that~$\bar{\Sigma}^a_{\text{\sffamily{\upshape{X}}}}$ is 0-GAS w.r.t.~$p_\ast$.
\begin{assum}\label{ass:07}~$~$
\begin{enumerate}[label=\arabic*]
	\item\label{cond:1} For every~$p\in\mathbb{R}^{N}$ with~$p\neq{p_\ast}$:~$\psi(p)<y_\ast:=\psi(p_\ast)$.
	\item\label{cond:2} The Hessian matrix~$\nabla^2\psi(p_\ast)$ is negative definite.
	\item\label{cond:3} There exists~$r_1\in\mathbb{R}_+$ such that~$\psi(p_\ast+v)=\psi(p_\ast-v)$ for every~$v\in\mathbb{R}^N$ with~$|v|\leq{r_1}$.
	\item\label{cond:4} There exists~$c_1\in\mathbb{R}_+$ such that~$|\nabla^2\psi(p)|\leq{c_1}$ for every~$p\in\mathbb{R}^N$.
	\item\label{cond:5} For every~$p\in\mathbb{R}^{N}$ with~$p\neq{p_\ast}$:~$\nabla\psi(p)\neq0$.
	\item\label{cond:6} For every~$y\in(\underline{y},y_\ast)$, the set~$\psi^{-1}(\geq{y})$ is compact, where~$\underline{y}:=\inf\{\psi(p) \ | \ p\in\mathbb{R}^N\} \in \mathbb{R}\cup\{-\infty\}$.
	\item\label{cond:7} There exist~$c_2,r_2,r_3\in\mathbb{R}_+$ such that~$|\nabla^2\psi(p+v)|\leq{c_2}|\nabla\psi(p)|$ for every~$p\in\mathbb{R}^N$ with~$|p-p_\ast|\geq{r_2}$ and every~$v\in\mathbb{R}^N$ with~$|v|\leq{r_3}$.
\end{enumerate}
\end{assum}
\begin{figure*}%
\centering$\begin{matrix}\begin{matrix}\hypertarget{singleIntegrator2resonate}{\text{\footnotesize(a)}}\\\vphantom{M}\\\vphantom{M}\\\vphantom{M}\\\vphantom{M}\\\vphantom{M}\\\vphantom{M}\end{matrix}\quad\begin{matrix}\includegraphics{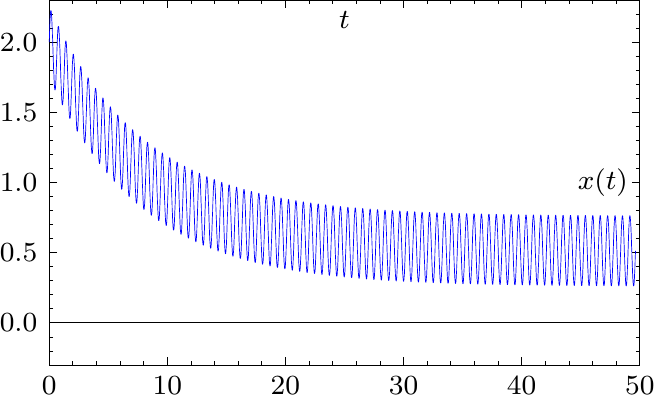}\end{matrix}\qquad&\qquad\begin{matrix}\hypertarget{singleIntegrator2Nonresonate}{\text{\footnotesize(b)}}\\\vphantom{M}\\\vphantom{M}\\\vphantom{M}\\\vphantom{M}\\\vphantom{M}\\\vphantom{M}\end{matrix}\quad\begin{matrix}\includegraphics{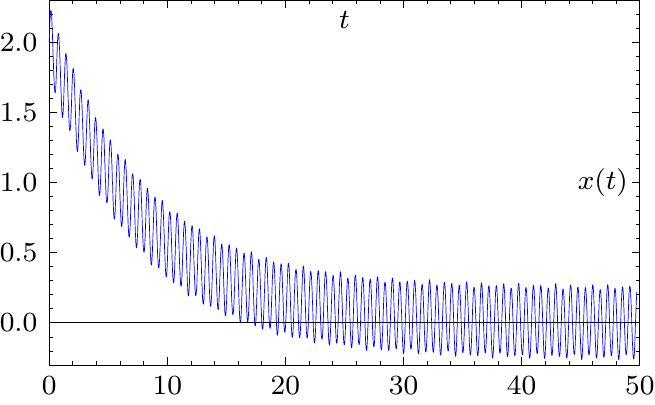}\end{matrix}\end{matrix}$%
\caption{Simulation results for the single integrator~\cref{eq:32} with objective function~$\psi$ as in~\cref{eq:36} and disturbance~$d_{\mathrm{y}}$ as in~\cref{eq:37} for~$a=1$, $\omega=10$, $\varepsilon=0.1$, and different values of~$\omega_{\text{d}}\in\mathbb{R}_+$. \protect\hyperlink{singleIntegrator2resonate}{(a)}:~$\omega_{\text{d}}=\omega$ (resonance); \protect\hyperlink{singleIntegrator2Nonresonate}{(b)}:~$\omega_{\text{d}}=\sqrt{2}\omega$ (no resonance).}%
\label{fig:singleIntegrator}%
\end{figure*}
\begin{rem}
We give some comments on the conditions in \Cref{ass:07} and their intention. Condition~\ref{cond:1} states that~$p_\ast$ is the unique global minimizer of~$\psi$. Condition~\ref{cond:2} ensures that the magnitude of the gradient of~$\psi$ increases sufficiently fast around~$p_\ast$. Condition~\ref{cond:3} states that~$\psi$ is locally even symmetric around~$p_\ast$. In the examples below, this ensures that the remainder vector field in~\cref{eq:26} vanishes at~$p_\ast$. Condition~\ref{cond:4} is used in the examples below to derive a global bound for the remainder vector field in~\cref{eq:26}. Condition~\ref{cond:5} ensures that the gradient vector provides an ascent direction of~$\psi$ at any point~$\neq{p_\ast}$. Condition~\ref{cond:6} is needed to apply a standard Lyapunov argument in the stability analysis for the averaged system. Condition~\ref{cond:7} is trivially satisfied if there exist~$\rho,c\in\mathbb{R}_+$ such that~$|\nabla(p)|\geq{c}$ for every~$p\in\mathbb{R}^N$ with~$|p-p_\ast|\geq\rho$ (because of condition~\ref{cond:4}). If~$\nabla\psi(p)$ vanishes as~$|p|\to\infty$, then condition~\ref{cond:7} provides an additional bound on the remainder vector field in~\cref{eq:26}.
\end{rem}
For given~$p_\ast\in\mathbb{R}^N$, we also state conditions, which will ensure that~$\bar{\Sigma}^a_{\text{\sffamily{\upshape{X}}}}$ is ISS w.r.t.~$p_\ast$.
\begin{assum}\label{ass:08}
Conditions~\ref{cond:1}-\ref{cond:4} in \Cref{ass:07} are satisfied and there exist~$\chi_1,\chi_2,\chi_3\in\mathcal{K}_{\infty}$ such that
\begin{align*}
-\chi_1(|p-p_\ast|) & \ \geq \ \psi(p)-y_\ast \ \geq \ -\chi_2(|p-p_\ast|), \\
|\nabla\psi(p)| & \ \geq \ \chi_3(|p-p_\ast|)
\end{align*}
for every~$p\in\mathbb{R}^{N}$.
\end{assum}
One can easily check that, if \Cref{ass:08} is satisfied, then \Cref{ass:07} is satisfied.


\subsection{Single integrator}\label{sec:0601}
One of the simplest (but nevertheless important) control systems is a single integrator
\begin{equation}\label{eq:32}
\dot{x}(t) \ = \ \mathrm{u}
\end{equation}
on~$\text{\sffamily{\upshape{X}}}:=\mathbb{R}^m$ together with an objective function~$\psi\in\mathfrak{F}(\text{\sffamily{\upshape{X}}})$. Clearly,~\cref{eq:32} is a particular case of~\cref{eq:14} if we set~$F_0$ identically equal to zero and if, for every~$i\in\{1,\ldots,m\}$, the vector field~$F_i$ is identically equal to the~$i$th unit vector of~$\mathbb{R}^m$. Therefore, we can apply the extremum seeking control law~\cref{eq:15} to~\cref{eq:32}, where the oscillatory signals~$u,v\in{L_{\infty}^m}$ are chosen as in \Cref{exm:01}. Then, \Cref{ass:01,ass:04,ass:05} are satisfied, and therefore all definitions and statements in \Cref{sec:05} apply to the single integrator system. In this simple case, the change of coordinates~\cref{eq:23} between solutions~$\xi=[x,\eta]$ of the closed-loop system~\cref{eq:20} and solutions~$\tilde{\xi}=[\tilde{x},\tilde{\eta}]$ of the pull-back system~\cref{eq:21} reduces to
\begin{equation}\label{eq:33}
x(t) \ = \ \tilde{x}(t) + a\,U(\omega{t}), \qquad \eta(t) \ = \ \tilde{\eta}(t),
\end{equation}
where~$U^i(\omega{t})=\alpha^i\sin(\varpi^i\omega{t})$ for every~$i\in\{1,\ldots,m\}$. Moreover, it is easy to check that the right-hand side~\cref{eq:29} of the averaged system~\cref{eq:31} is given by
\begin{equation}\label{eq:34}
\bar{\Sigma}^a_{\text{\sffamily{\upshape{X}}}}(\bar{x},\bar{w}) \ = \ \overline{Uv}\cdot\nabla\psi(\bar{x}) + \delta\bar{G}^{a}(\bar{x}) + \bar{w},
\end{equation}
where~$\overline{Uv}$ is the positive definite~$(m\times{m})$-diagonal matrix with entries~$\overline{Uv}^{i,j}$ given by~\cref{eq:10}, and the remainder vector field~$\delta\bar{G}^{a}$ is given by~\cref{eq:26}. A direct computation, using~\cref{eq:33}, shows that
\begin{subequations}\label{eq:35}
\begin{align}
& \delta\bar{G}^{a}(\bar{x}) \ = \ \frac{a}{T}\int_0^T\int_0^1(1-s) \\
& \quad\times\langle{\nabla^2\psi(\bar{x}+asU(\tau))U(\tau),U(\tau)}\rangle\,v(\tau)\,\mathrm{d}s\,\mathrm{d}\tau
\end{align}
\end{subequations}
for every~$a\in\mathbb{R}_+$ and every~$\bar{x}\in\text{\sffamily{\upshape{X}}}$.
\begin{rem}\label{rem:10}
Note that the sinusoids~$u,v$ in \Cref{exm:01} can always be chosen in such a way that the matrix~$\overline{Uv}$ in~\cref{eq:34} is the identity. Moreover, the remainder term~\cref{eq:35} vanishes as~$a\to0$. Consequently, if~$a$ is sufficiently small, then the averaged system is more or less the same as
\[
\dot{\bar{x}}(t) \ = \ \nabla\psi(\bar{x}(t)) + \bar{w}(t).
\]
This indicates that the closed-loop system approximates the behavior of the gradient system of~$\psi$ with a disturbance $\bar{w}(t)=[w_{a,\omega{t},d(t)}^1,\ldots,w_{a,\omega{t},d(t)}^m]^{\top}$ given by~\cref{eq:19A}.
\end{rem}
We can prove the following results for the averaged system~\cref{eq:31} with its right-hand side~$\bar{\Sigma}^a_{\text{\sffamily{\upshape{X}}}}$ given by~\cref{eq:34}.
\begin{thm}\label{thm:02}
Let~$x_\ast\in\mathbb{R}^m$.
\begin{enumerate}[label=(\alph*)]
	\item Suppose that \Cref{ass:07} is satisfied for~$N=m$ and~$p_\ast=x_\ast$. Then, there exists~$a_0\in\mathbb{R}_+$ such that~$\bar{\Sigma}^a_{\text{\sffamily{\upshape{X}}}}$ is 0-GAS w.r.t.~$x_\ast$ for every~$a\in(0,a_0)$.
	\item Suppose that \Cref{ass:08} is satisfied for~$N=m$ and~$p_\ast=x_\ast$. Then, there exists~$a_0\in\mathbb{R}_+$ such that~$\bar{\Sigma}^a_{\text{\sffamily{\upshape{X}}}}$ is ISS w.r.t.~$x_\ast$ for every~$a\in(0,a_0)$.
\end{enumerate}
\end{thm}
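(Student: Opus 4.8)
The plan is to use the single Lyapunov function $V(\bar{x}):=y_\ast-\psi(\bar{x})$ for both parts. By condition~\ref{cond:1}, $V$ is positive definite with respect to $x_\ast$, and conditions~\ref{cond:1}--\ref{cond:2} force $\nabla\psi(x_\ast)=0$. Writing the right-hand side \cref{eq:34} as a gradient term, a remainder, and the disturbance, the derivative of $V$ along the averaged system \cref{eq:31} is
\begin{equation*}
\dot{V} \ = \ -\langle\nabla\psi(\bar{x}),\overline{Uv}\,\nabla\psi(\bar{x})\rangle - \langle\nabla\psi(\bar{x}),\delta\bar{G}^a(\bar{x})\rangle - \langle\nabla\psi(\bar{x}),\bar{w}\rangle .
\end{equation*}
Since $\overline{Uv}$ is positive definite and diagonal, the first term is at most $-\lambda\,|\nabla\psi(\bar{x})|^2$, where $\lambda>0$ is its smallest diagonal entry. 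Both assertions then reduce to showing that the remainder $\delta\bar{G}^a$ is dominated by the gradient term once $a$ is small.

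The crux — and the step I expect to be the main obstacle — is the \emph{global} estimate $|\delta\bar{G}^a(\bar{x})|\le a\,K\,|\nabla\psi(\bar{x})|$ for all $\bar{x}\in\text{\sffamily{\upshape{X}}}$, uniformly in small $a$. The difficulty is precisely that $\nabla\psi$ vanishes at $x_\ast$, so the crude uniform bound $|\delta\bar{G}^a|\le a\,c_1\,C$ (from \cref{eq:35} and condition~\ref{cond:4}, with $C:=\tfrac1T\int_0^T\tfrac12|U(\tau)|^2|v(\tau)|\,\mathrm{d}\tau$) fails near $x_\ast$; indeed, if $\delta\bar{G}^a(x_\ast)\neq0$ the cross term would dominate near $x_\ast$ and could push $\dot{V}$ positive. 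I would split $\text{\sffamily{\upshape{X}}}$ into three regions. \emph{Far region} ($|\bar{x}-x_\ast|\ge r_2$): for $a$ small enough that $a\max_\tau|U(\tau)|\le r_3$, condition~\ref{cond:7} yields $|\nabla^2\psi(\bar{x}+asU(\tau))|\le c_2|\nabla\psi(\bar{x})|$, hence $|\delta\bar{G}^a(\bar{x})|\le a\,c_2\,C\,|\nabla\psi(\bar{x})|$. \emph{Annulus} ($\rho_0\le|\bar{x}-x_\ast|\le r_2$): this set is compact, so by condition~\ref{cond:5} the continuous map $|\nabla\psi|$ has a positive minimum $m_0$, and the uniform bound gives $|\delta\bar{G}^a(\bar{x})|\le (a\,c_1\,C/m_0)\,|\nabla\psi(\bar{x})|$. \emph{Inner region} ($|\bar{x}-x_\ast|\le\rho_0$): first, the local even symmetry of condition~\ref{cond:3} makes the integrand in \cref{eq:35} odd under $\tau\mapsto-\tau$ (since $\nabla^2\psi(x_\ast+asU(\tau))=\nabla^2\psi(x_\ast-asU(\tau))$ once $a\max_\tau|U(\tau)|\le r_1$, while the chosen $U,v$ are odd), so $\delta\bar{G}^a(x_\ast)=0$; second, $\psi$ is smooth and the arguments $\bar{x}+asU(\tau)$ stay in a fixed compact set, so $\nabla^2\psi$ is Lipschitz there and $|\delta\bar{G}^a(\bar{x})|=|\delta\bar{G}^a(\bar{x})-\delta\bar{G}^a(x_\ast)|\le a\,L\,C\,|\bar{x}-x_\ast|$; third, negative definiteness of $\nabla^2\psi(x_\ast)$ (condition~\ref{cond:2}) gives, after shrinking $\rho_0$, $|\nabla\psi(\bar{x})|\ge c_3|\bar{x}-x_\ast|$ on the inner region, whence $|\delta\bar{G}^a(\bar{x})|\le (a\,L\,C/c_3)\,|\nabla\psi(\bar{x})|$. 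Taking $K$ to be the largest of the three constants and $a_0$ small enough to meet all smallness requirements finishes the estimate.

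For part~(a), set $\bar{w}\equiv0$. The remainder estimate gives $\dot{V}\le-(\lambda-aK)\,|\nabla\psi(\bar{x})|^2$, which is strictly negative for all $\bar{x}\neq x_\ast$ as soon as $aK<\lambda$; redefining $a_0$ as the minimum of the earlier threshold and $\lambda/K$, the function $V$ is a strict Lyapunov function for every $a\in(0,a_0)$. Condition~\ref{cond:6} makes the superlevel sets of $\psi$ — equivalently the sublevel sets of $V$ — compact, so trajectories are bounded and forward complete; the standard Lyapunov argument then yields a $\mathcal{K}\mathcal{L}$ estimate, i.e.\ $\bar{\Sigma}^a_{\text{\sffamily{\upshape{X}}}}$ is 0-GAS w.r.t.\ $x_\ast$.

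For part~(b), keep $\bar{w}$. Since \Cref{ass:08} implies \Cref{ass:07}, the remainder estimate still applies, and the bounds of \Cref{ass:08} give $\chi_1(|\bar{x}-x_\ast|)\le V(\bar{x})\le\chi_2(|\bar{x}-x_\ast|)$ with $\chi_1,\chi_2\in\mathcal{K}_\infty$, so $V$ is a proper ISS-Lyapunov candidate. Now $\dot{V}\le-(\lambda-aK)\,|\nabla\psi(\bar{x})|^2+|\nabla\psi(\bar{x})|\,|\bar{w}|$. Writing $c_a:=\lambda-aK>0$ and using the global bound $|\nabla\psi(\bar{x})|\ge\chi_3(|\bar{x}-x_\ast|)$, I would verify the gain-margin dissipation inequality: whenever $|\bar{x}-x_\ast|\ge\chi_3^{-1}\!\big(\tfrac{2}{c_a}|\bar{w}|\big)=:\rho(|\bar{w}|)$ one has $|\bar{w}|\le\tfrac{c_a}{2}|\nabla\psi(\bar{x})|$, hence $\dot{V}\le-\tfrac{c_a}{2}|\nabla\psi(\bar{x})|^2\le-\tfrac{c_a}{2}\chi_3(|\bar{x}-x_\ast|)^2$, with $\rho\in\mathcal{K}_\infty$ because $\chi_3\in\mathcal{K}_\infty$. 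This is exactly the implication form of the ISS-Lyapunov theorem (see, e.g., \cite{Sontag1995}), so $\bar{\Sigma}^a_{\text{\sffamily{\upshape{X}}}}$ is ISS w.r.t.\ $x_\ast$ for every $a\in(0,a_0)$.
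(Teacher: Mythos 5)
Your proposal is correct and takes essentially the same route as the paper: the paper proves \Cref{thm:03} via the Lyapunov function built from $y_\ast-\psi$, the three-region gradient-domination estimate $|\phi^a(x)|\le\kappa\,|\nabla\psi(p)|$ of \Cref{lem:03} (near $p_\ast$ via conditions~\ref{cond:1}--\ref{cond:3} and smoothness, far via condition~\ref{cond:7}, the compact annulus via condition~\ref{cond:5}), sublevel-set compactness as in \Cref{lem:01}, and the implication-form ISS-Lyapunov theorem of~\cite{Sontag1995}, and then states that \Cref{thm:02} follows ``in a similar way'' --- which is precisely your adaptation, with $V=y_\ast-\psi$ sufficing because the integrator needs no orientation-correction term $\varepsilon\langle\nabla\psi,o-o_\perp\rangle^2$. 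Your bound $|\delta\bar{G}^a(\bar{x})|\le aK|\nabla\psi(\bar{x})|$ and the gain-margin dissipation argument mirror the paper's \Cref{lem:02,lem:03} and appendix arguments point for point.
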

The idea of the proof can be found in \Cref{sec:appendix}.

To generate numerical data, we consider a single integrator in dimension~$m:=1$ with a quadratic objective function~$\psi$ given by
\begin{equation}\label{eq:36}
\psi(x) \ := \ -x^2/2.
\end{equation}
It is clear that \Cref{ass:08} is satisfied for~$N=1$ and~$p_\ast=0$. Therefore, we may conclude from \Cref{thm:01,thm:02} that, if~$a\in\mathbb{R}_+$ is sufficiently small, then $(\tilde{\Sigma}_{\text{ES}}^{a,\omega})_{\omega}$ is large-disturbance SGPUAS w.r.t. the origin in the sense of \Cref{def:06}. We choose the constants~$\alpha:=1/4$ and~$c:=\varpi:=1$ for the sinusoids~$u,v$ in \Cref{exm:01} and~$h:=1$ for the gain in~\cref{eq:16}. As an example of a disturbance~$d=[d_{\mathrm{u}}^{\top},d_{\mathrm{y}}]^{\top}$, we consider~$d_{\mathrm{u}}(t):=0$,
\begin{equation}\label{eq:37}
d_{\mathrm{y}}(t) \ := \ \varepsilon\,\operatorname{sgn}(\sin(\omega_{\text{d}}\,t)),
\end{equation}
where~$\varepsilon,\omega_{\text{d}}\in\mathbb{R}_+$ and $\operatorname{sgn}\colon\mathbb{R}\to\{-1,+1\}$ is the sign function. One can see in \Cref{fig:singleIntegrator} that a disturbance with $\omega_{\text{d}}=\omega$ has a particularly negative influence on the closed-loop system because it generates a non-vanishing drift through resonances with the dither signal~$t\mapsto{v}(\omega{t})$. An uncorrelated disturbance has almost no effect on the closed-loop system.


\begin{figure*}%
\centering$\begin{matrix}\begin{matrix}\hypertarget{unicycleTL}{\text{\footnotesize(a)}}\\\vphantom{M}\\\vphantom{M}\\\vphantom{M}\\\vphantom{M}\\\vphantom{M}\\\vphantom{M}\end{matrix}\quad\begin{matrix}\includegraphics{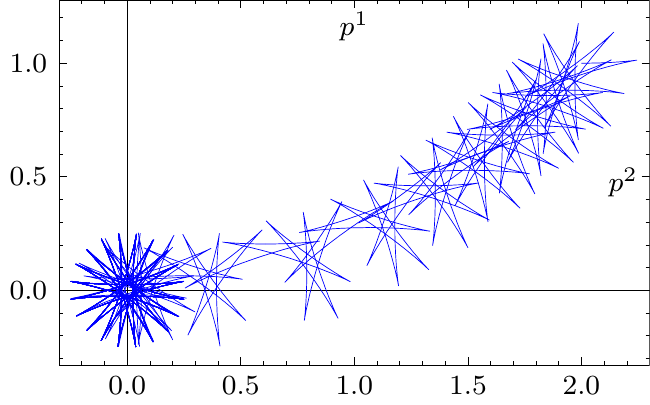}\end{matrix}\qquad&\qquad\begin{matrix}\hypertarget{unicycleTR}{\text{\footnotesize(b)}}\\\vphantom{M}\\\vphantom{M}\\\vphantom{M}\\\vphantom{M}\\\vphantom{M}\\\vphantom{M}\end{matrix}\quad\begin{matrix}\includegraphics{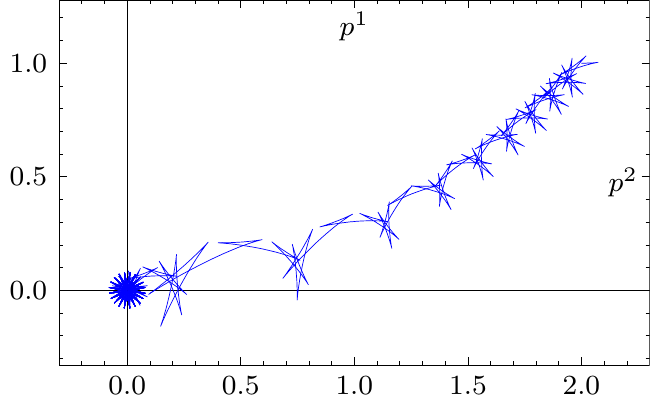}\end{matrix}\\\begin{matrix}\hypertarget{unicycleBL}{\text{\footnotesize(c)}}\\\vphantom{M}\\\vphantom{M}\\\vphantom{M}\\\vphantom{M}\\\vphantom{M}\\\vphantom{M}\end{matrix}\quad\begin{matrix}\includegraphics{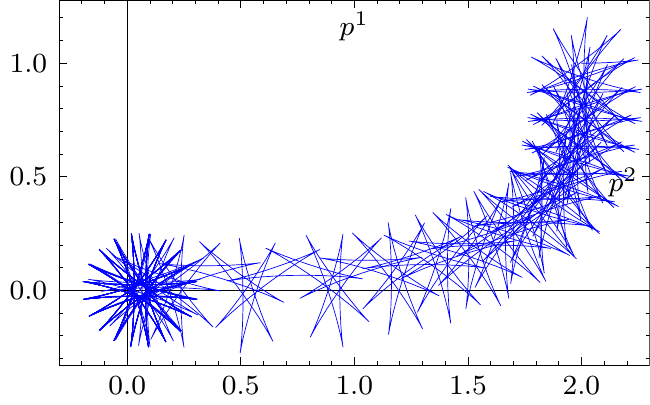}\end{matrix}\qquad&\qquad\begin{matrix}\hypertarget{unicycleBR}{\text{\footnotesize(d)}}\\\vphantom{M}\\\vphantom{M}\\\vphantom{M}\\\vphantom{M}\\\vphantom{M}\\\vphantom{M}\end{matrix}\quad\begin{matrix}\includegraphics{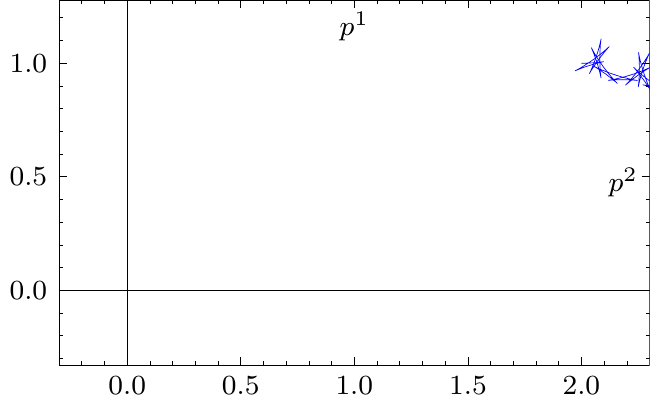}\end{matrix}\end{matrix}$%
\caption{Simulation results for the source seeking kinematic unicycle~\cref{eq:40} with signal function~$\psi$ as in~\cref{eq:44} and disturbance~$d_{\mathrm{y}}$ as in~\cref{eq:45} for~$\omega=10$ and different values of~$a,\varepsilon\in\bar{\mathbb{R}}_+$. \protect\hyperlink{unicycleTL}{(a)}:~$a=1$, $\varepsilon=0$; \protect\hyperlink{unicycleTR}{(b)}:~$a=1/\sqrt{\omega}$, $\varepsilon=0$; \protect\hyperlink{unicycleBL}{(c)}:~$a=1$, $\varepsilon=0.1$; \protect\hyperlink{unicycleBR}{(d)}:~$a=1/\sqrt{\omega}$, $\varepsilon=0.1$. In plot~\protect\hyperlink{unicycleBR}{(d)}, the disturbance leads to an escape of the trajectory with increasing time}%
\label{fig:unicycle}%
\end{figure*}
\subsection{Unicycle}\label{sec:0602}
As a second example, we consider the intensively-studied problem of source seeking with a kinematic unicycle. At any time~$t\in\mathbb{R}$, the state
\begin{equation*}
x(t) \ = \ [ p(t), o(t) ] \ \in \ \mathbb{R}^2\times\mathbb{S}^1 \ =: \ \text{\sffamily{\upshape{X}}}
\end{equation*}
of the unicycle is determined by its current position~$p(t)\in\mathbb{R}^2$ and its current orientation~$o(t)\in\mathbb{S}^1$, where~$\mathbb{S}^1$ denotes the circle in~$\mathbb{R}^2$ of radius~1 centered at the origin. If we choose a standard chart
\begin{equation}\label{eq:38}
[ p^1,p^2,\theta]\colon\mathcal{U}\subset\text{\sffamily{\upshape{X}}}\to\mathbb{R}^3
\end{equation}
for~$\text{\sffamily{\upshape{X}}}$, then the kinematic unicycle model reads
\begin{equation*}
\dot{p}^1(t) \, = \, \mathrm{u}^1\cos\theta(t), \quad \dot{p}^2(t) \, = \, \mathrm{u}^1\sin\theta(t), \quad \dot{\theta}(t) \, = \, \mathrm{u}^0,
\end{equation*}
where~$\mathrm{u}^1$ is the input channel for the forward velocity and~$\mathrm{u}^0$ is the input channel for the angular velocity. We assume that the source signal is given by a smooth real-valued (purely position-dependent) function~$\psi$ on~$\mathbb{R}^2$. We assume that, at any time~$t\in\mathbb{R}$, the unicycle can measure the value
\begin{equation*}
\hat{y}(t) \ = \ \psi(p(t)) + d_{\mathrm{y}}(t)
\end{equation*}
of~$\psi$ at~$p(t)$ up to some measurement noise~$d_{\mathrm{y}}\in{L_{\infty}}$. The goal is to find a position where the signal function attains a maximum value.

To give the unicycle access to any direction of the plane, we apply a constant angular speed
\begin{equation}\label{eq:39}
\mathrm{u}^0 \ = \ \Omega \ \in \ \mathbb{R}_+.
\end{equation}
Next, to establish a situation as in \Cref{sec:05}, we set~$m:=1$ and define two vector fields
\begin{equation*}
F_0 \ := \ \Omega\,\frac{\partial}{\partial\theta} \quad \text{and} \quad F_1 \ := \ \cos\theta\,\frac{\partial}{\partial{p^1}} + \sin\theta\,\frac{\partial}{\partial{p^2}}
\end{equation*}
on~$\text{\sffamily{\upshape{X}}}$ in the coordinates of~\cref{eq:38}. Then, the unicycle control system can be written as the single-input system
\begin{equation}\label{eq:40}
\dot{x}(t) \ = \ F_0(x(t)) + \mathrm{u}^1\,F_1(x(t))
\end{equation}
on~$\text{\sffamily{\upshape{X}}}$, which is a particular case of~\cref{eq:14}. By a slight abuse of notation, we define~$\psi\in\mathfrak{F}(\text{\sffamily{\upshape{X}}})$ by~$\psi(x):=\psi(p)$ for every~$x=[p,o]\in\text{\sffamily{\upshape{X}}}$. Now we are in the situation of \Cref{sec:05} and we can apply the extremum seeking control law~\cref{eq:15} to~\cref{eq:40}, where the oscillatory signals~$u,v\colon\mathbb{R}\to\mathbb{R}$ are chosen as in \Cref{exm:01}. Then, \Cref{ass:01,ass:04,ass:05} are satisfied, and therefore all definitions and statements in \Cref{sec:05} apply to the unicycle system. In this simple case, the change of coordinates~\cref{eq:23} between solutions~$\xi=[p,o,\eta]$ of the closed-loop system~\cref{eq:20} and solutions~$\tilde{\xi}=[\tilde{p},\tilde{o},\tilde{\eta}]$ of the pull-back system~\cref{eq:21} reduces to
\begin{subequations}\label{eq:41}
\begin{align}
p(t) & \ = \ \tilde{p}(t) + a\,U(\omega{t})\,\tilde{o}(t), \\
o(t) & \ = \ \tilde{o}(t), \qquad \eta(t) \ = \ \tilde{\eta}(t),
\end{align}
\end{subequations}
where~$U(\omega{t})=\alpha\sin(\varpi\omega{t})$. Moreover, a direct computation reveals that the right-hand side~\cref{eq:29} of the averaged system~\cref{eq:31} is given by
\begin{subequations}\label{eq:42}
\begin{align}
\bar{\Sigma}^a_{\text{\sffamily{\upshape{X}}}}(\bar{x},\bar{w}) & \ = \ F_0(\bar{x}) + \overline{Uv}\,\langle\nabla\psi(\bar{p}),\bar{o}\rangle\,F_1(\bar{x}) \\
& \qquad + \delta\bar{G}^{a}(\bar{x}) + \bar{w}\,F_1(\bar{x}),
\end{align}
\end{subequations}
where~$\overline{Uv}=\alpha{c}/2$ as in~\cref{eq:10}, and the remainder vector field~$\delta\bar{G}^{a}$ is given by~\cref{eq:26}. A direct computation, using~\cref{eq:41}, shows that
\begin{subequations}\label{eq:43}
\begin{align}
& \delta\bar{G}^{a}(\bar{x}) \ = \ \frac{a}{T}\int_0^T\int_0^1(1-s)\,U(\tau)\,U(\tau)\,v(\tau) \\
& \qquad\times\langle{\nabla^2\psi(\bar{p}+asU(\tau)\bar{o})\bar{o},\bar{o}}\rangle\,\mathrm{d}s\,\mathrm{d}\tau\,F_1(\bar{x})
\end{align}
\end{subequations}
for every~$a\in\mathbb{R}_+$ and every~$\bar{x}=[\bar{p},\bar{o}]\in\text{\sffamily{\upshape{X}}}$. We can prove the following results for the averaged system~\cref{eq:31} with its right-hand side~$\bar{\Sigma}^a_{\text{\sffamily{\upshape{X}}}}$ given by~\cref{eq:42}.
\begin{thm}\label{thm:03}
Let~$p_\ast\in\mathbb{R}^2$.
\begin{enumerate}[label=(\alph*)]
	\item\label{thm:03:a} Suppose that \Cref{ass:07} is satisfied for~$N=2$. Then, there exists~$a_0\in\mathbb{R}_+$ such that~$\bar{\Sigma}^a_{\text{\sffamily{\upshape{X}}}}$ is 0-GAS w.r.t.~$\{p_\ast\}\times\mathbb{S}^1$ for every~$a\in(0,a_0)$.
	\item\label{thm:03:b} Suppose that \Cref{ass:08} is satisfied for~$N=2$. Then, there exists~$a_0\in\mathbb{R}_+$ such that~$\bar{\Sigma}^a_{\text{\sffamily{\upshape{X}}}}$ is ISS w.r.t.~$\{p_\ast\}\times\mathbb{S}^1$ for every~$a\in(0,a_0)$.
\end{enumerate}
\end{thm}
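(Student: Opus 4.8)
My plan is to exhibit an energy function for the averaged field $\bar{\Sigma}^a_{\text{\sffamily{\upshape{X}}}}$ in~\cref{eq:42} on $\text{\sffamily{\upshape{X}}}=\mathbb{R}^2\times\mathbb{S}^1$ relative to the target set $K:=\{p_\ast\}\times\mathbb{S}^1$. Since $F_0=\Omega\,\partial_\theta$ merely rotates the heading at the constant rate $\dot{\bar\theta}=\Omega$ while every remaining term of~\cref{eq:42} is a multiple of $F_1$, the position obeys $\dot{\bar p}=\big(\overline{Uv}\,\langle\nabla\psi(\bar p),\bar o\rangle+g^a(\bar x)+\bar w\big)\,\bar o$, where $g^a$ is the scalar coefficient of the remainder~\cref{eq:43}. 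The natural candidate is $V(\bar x):=y_\ast-\psi(\bar p)$, which by condition~\ref{cond:1} is positive definite w.r.t.\ $K$ (note $|\bar x|_K=|\bar p-p_\ast|$) and which, through the descent inequality $|\nabla\psi(\bar p)|^2\le 2c_1\,V(\bar x)$ furnished by conditions~\ref{cond:1} and~\ref{cond:4}, will later control the corrections. A first computation gives $\dot V=-\overline{Uv}\,\langle\nabla\psi,\bar o\rangle^2-\langle\nabla\psi,\bar o\rangle\,(g^a+\bar w)$. The principal term is only negative \emph{semi}definite --- it vanishes whenever $\bar o\perp\nabla\psi(\bar p)$ --- and this rotational degeneracy is the obstacle that shapes the entire argument.

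Before exploiting $V$ I would neutralise the remainder uniformly on $\text{\sffamily{\upshape{X}}}$. The local even symmetry in condition~\ref{cond:3}, paired with the odd parity of the weight $U^{i_1}U^{i_2}v^j$ against the sinusoids in~\cref{eq:43}, forces $g^a(p_\ast)=0$, so $K$ remains invariant. Condition~\ref{cond:4} yields a bound $|g^a(\bar x)|\le a\,C\,c_1\,|\nabla\psi(\bar p)|$ with $C$ depending only on the dither, and condition~\ref{cond:7} sharpens this for $|\bar p-p_\ast|\ge r_2$ (and $a$ small enough that $a\,|U(\tau)|\le r_3$) to $|g^a(\bar x)|\le a\,C\,c_2\,|\nabla\psi(\bar p)|$. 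Consequently the remainder contributes at most $a\,C\,c_2\,|\nabla\psi|^2$ to $\dot V$, which is dominated by the principal term once $a$ is below some $a_0$; this is exactly where the non-compactly-supported remainder is tamed.

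For part~(a) I would invoke the invariance principle. The averaged system is autonomous, $V$ is non-increasing, and by condition~\ref{cond:6} every trajectory is confined to a compact sublevel set, hence precompact. The largest invariant subset of $\{\dot V=0\}=\{\langle\nabla\psi(\bar p),\bar o\rangle=0\}$ equals $K$: on a trajectory trapped in this set the position is frozen while $\bar o$ sweeps all of $\mathbb{S}^1$, forcing $\nabla\psi(\bar p)=0$ and hence, by condition~\ref{cond:5}, $\bar p=p_\ast$. LaSalle then gives attractivity of $K$, and together with Lyapunov stability this yields 0-GAS w.r.t.\ $K$ (\Cref{def:01}) for every $a\in(0,a_0)$.

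For part~(b) LaSalle is unavailable and the semidefiniteness must be removed outright. Writing $a_1:=\langle\nabla\psi,\bar o\rangle$, $a_2:=\langle\nabla\psi,\bar o^{\perp}\rangle$ and using $\dot{\bar o}=\Omega\,\bar o^{\perp}$, $\dot{\bar o}^{\perp}=-\Omega\,\bar o$, one finds $\dot a_1=\Omega a_2+O(|\dot{\bar p}|)$ and $\dot a_2=-\Omega a_1+O(|\dot{\bar p}|)$, so the very rotation that blocks decay of $V$ is what feeds $a_1$ into $a_2$. I would therefore strictify $V$ via the cross term $\tilde V:=V-\tfrac{\mu}{\Omega}\,a_1a_2$ with small $\mu\in\mathbb{R}_+$; since $|a_1a_2|\le\tfrac12|\nabla\psi|^2\le c_1V$, the bound from the first paragraph keeps $\tilde V$ positive definite and proper for $\mu$ small. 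The rotational part $\tfrac{d}{dt}(a_1a_2)=\Omega(a_2^2-a_1^2)+\cdots$ then produces a definite estimate $\dot{\tilde V}\le-(\overline{Uv}-\mu)a_1^2-\mu a_2^2+(\text{remainder})+(\text{disturbance})$, which after absorbing $\bar w$ by Young's inequality and the remainder as above reads $\dot{\tilde V}\le-c\,|\nabla\psi(\bar p)|^2+\sigma(\|\bar w\|)$; the $\mathcal{K}_\infty$-bounds of \Cref{ass:08} then convert this into an ISS-Lyapunov inequality w.r.t.\ $K$ (\Cref{def:02}). I expect the main obstacle to lie exactly here: beyond the rotational degeneracy, the Hessian cross-couplings $O(|\dot{\bar p}|)\,a_j$ generate additional quadratic terms of size $\tfrac{\mu}{\Omega}c_1\overline{Uv}\,|\nabla\psi|^2$, so the dissipation survives \emph{globally} only after a careful balancing of $\mu$ against $\Omega$, $\overline{Uv}$ and $c_1$ (exploiting the design freedom in the angular speed $\Omega$) together with the uniform remainder bound. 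Once $\bar{\Sigma}^a_{\text{\sffamily{\upshape{X}}}}$ is shown 0-GAS, resp.\ ISS, w.r.t.\ $K$, \Cref{prop:03} lifts the property to $\bar{\Sigma}^a$ on $\text{\sffamily{\upshape{X}}}\times\mathbb{R}$, and \Cref{thm:01} transfers it to the closed loop.
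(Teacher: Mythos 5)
Your part (b) follows, in substance, the paper's own proof, but your part (a) contains a genuine gap. You claim that for $V(\bar x)=y_\ast-\psi(\bar p)$ the averaged flow makes ``$V$ non-increasing'' and that the remainder's contribution to $\dot V$ ``is dominated by the principal term once $a$ is below some $a_0$''. Both claims are false. Writing $a_1=\langle\nabla\psi(\bar p),\bar o\rangle$ and $g^a$ for the scalar coefficient of~\cref{eq:43}, one has
\begin{equation*}
\dot V \ = \ -\overline{Uv}\,a_1^2 \ - \ a_1\,g^a(\bar x),
\end{equation*}
so the principal term is $-\overline{Uv}\,a_1^2$, \emph{not} $-\overline{Uv}\,|\nabla\psi(\bar p)|^2$: it vanishes quadratically on the degenerate set $\{\bar o\perp\nabla\psi(\bar p)\}$, while the remainder term vanishes there only linearly. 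Since $g^a$ is a weighted average of $\langle\nabla^2\psi(\cdot)\,\bar o,\bar o\rangle$ along the dither and has no reason to vanish or to align in sign with $a_1$, there are points with $0<|a_1|<|g^a|/\overline{Uv}$ and $a_1 g^a<0$, at which $\dot V>0$ --- and this happens for \emph{every} $a>0$, however small. Without monotonicity of $V$ you get neither confinement to compact sublevel sets nor the hypotheses of LaSalle, so the invariance argument collapses at its first step. This rotational degeneracy is exactly why the paper never works with the bare energy: its Lyapunov function $V_\varepsilon(x)=y_\ast-\psi(p)+\varepsilon\langle\nabla\psi(p),o-o_\perp\rangle^2$ is strictified from the outset, and \Cref{lem:02} produces the definite term $-\varepsilon\Omega|\nabla\psi(p)|^2$, which \emph{does} dominate the remainder bound $2a|\phi^a|\,|\nabla\psi|\le 2a\kappa|\nabla\psi|^2$ of \Cref{lem:03} once $a<\varepsilon\Omega/(2\kappa)$. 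The repair is immediate and already contained in your own part (b): since $\langle\nabla\psi,o-o_\perp\rangle^2=|\nabla\psi|^2-2a_1a_2$, your cross term $-\tfrac{\mu}{\Omega}a_1a_2$ is the same mechanism as the paper's; run that construction with $\bar w\equiv0$ and conclude part (a) from the strict decay $\dot{\tilde V}\le -c\,|\nabla\psi|^2$ together with the properness supplied by condition~\ref{cond:6} --- no LaSalle needed (and, contrary to your parenthetical remark, no freedom in $\Omega$ is required: for fixed $\Omega$ one only shrinks $\mu$ against $\Omega$, $\overline{Uv}$, $c_1$, exactly as in \Cref{lem:01,lem:02}).

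A second, smaller gap sits in your remainder estimate, on which both parts rest. The bound $|g^a(\bar x)|\le a\,C\,c_1\,|\nabla\psi(\bar p)|$ does \emph{not} follow from condition~\ref{cond:4}; that condition alone yields only the constant bound $|g^a|\le a\,C\,c_1$, which is useless near the set where the decay term degenerates. A gradient-proportional bound requires the three-region argument of \Cref{lem:03}: near $p_\ast$, conditions~\ref{cond:1}--\ref{cond:3} (evenness makes $g^a$ vanish on $\{p_\ast\}\times\mathbb{S}^1$, and the negative definite Hessian gives $|p-p_\ast|\le\kappa_1'|\nabla\psi(p)|$) combined with smoothness; on an intermediate compact annulus, condition~\ref{cond:5}; and only for $|p-p_\ast|\ge r_2$ does condition~\ref{cond:7} deliver the estimate you state. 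With these two repairs --- strictify before arguing decay, and prove the remainder bound region by region --- your proposal coincides with the paper's proof.
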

The proof can be found in \Cref{sec:appendix}.

To generate numerical results, we suppose that the source signal function~$\psi$ is given by
\begin{equation}\label{eq:44}
\psi(p) \ = \ \frac{4}{1+(p^1)^2+2\,(p^2)^2}
\end{equation}
for every~$p=[p^1,p^2]\in\mathbb{R}^2$. One can check that \Cref{ass:07} is satisfied for~$N=2$. Therefore, we may conclude from \Cref{thm:01,thm:03} that, if~$a\in\mathbb{R}_+$ is sufficiently small, then $(\tilde{\Sigma}_{\text{ES}}^{a,\omega})_{\omega}$ is small-disturbance SGPUAS w.r.t.~$\{0\}\times\mathbb{S}^1$ in the sense of \Cref{def:05}. We choose the constants~$\alpha:=1/4$ and~$c:=\varpi:=1$ for the sinusoids~$u,v$ in \Cref{exm:01}, $h:=1$ for the gain in~\cref{eq:16}, and~$\Omega:=1$ for the angular speed in~\cref{eq:39}. As an example of a disturbance~$d=[d_{\mathrm{u}}^{\top},d_{\mathrm{y}}]^{\top}$, we consider~$d_{\mathrm{u}}(t):=0$,
\begin{equation}\label{eq:45}
d_{\mathrm{y}}(t) \ := \ \varepsilon\,\sin(\omega{t})\,\cos(\Omega{t}),
\end{equation}
where~$\varepsilon\in\bar{\mathbb{R}}_+$ is an amplitude and~$\omega\in\mathbb{R}_+$ is the frequency of the oscillatory signals. This type of disturbance has a particularly negative influence on the closed-loop system because it generates a non-vanishing drift through resonances with the dither signal~$t\mapsto{v}(\omega{t})$. \Cref{fig:unicycle} contains a selection of trajectories of the closed-loop system~\cref{eq:20}. One can see that an increase of the parameter~$a\in\mathbb{R}_+$ leads to stronger robustness against disturbances but also to less accuracy if no disturbances are present.

\bibliographystyle{plain}
\bibliography{bibFile}


\appendix
\section{Appendix}\label{sec:appendix}
We only give a proof of \Cref{thm:03}. The statements of \Cref{thm:02} can be deduced in a similar way. Throughout this section, we use the same notation as in \Cref{sec:0602} and we suppose that \Cref{ass:07} is satisfied.

For every~$o\in\mathbb{S}^1$, let~$o_{\perp}$ denote the unique element of~$\mathbb{S}^1$ such that~$(o,o_{\perp})$ is a positively oriented orthogonal basis of~$\mathbb{R}^2$ with respect to~$\langle\cdot,\cdot\rangle$. Next, we introduce a candidate for a Lyapunov function. For every~$\varepsilon\in\mathbb{R}_+$, define~$V_{\varepsilon}\colon{\text{\sffamily{\upshape{X}}}}\to\mathbb{R}$ by
\[
V_{\varepsilon}(x) \ := \ y_\ast - \psi(p) + \varepsilon\,\langle\nabla\psi(p),o-o_{\perp}\rangle^2
\]
for every~$x=[p,o]\in\text{\sffamily{\upshape{X}}}$. For every~$\varepsilon\in\mathbb{R}_+$ and every~$z\in\mathbb{R}$, let~$V_{\varepsilon}^{-1}(\leq{z})$ denote the \emph{$z$-sublevel set of~$V_{\varepsilon}$}; i.e., the (possibly empty) set of all~$x\in\text{\sffamily{\upshape{X}}}$ with~$V_{\varepsilon}(x)\leq{z}$.
\begin{lem}\label{lem:01}
Define~$z_\ast:=y_\ast-\underline{y}\in\mathbb{R}_+\cup\{+\infty\}$. Then, there exists~$\varepsilon_1\in\mathbb{R}_+$ such that, for every~$\varepsilon\in(0,\varepsilon_1)$,
\begin{itemize}
	\item~$0<V_{\varepsilon}(x)<z_\ast$ for every~$x=[p,o]\in\text{\sffamily{\upshape{X}}}$ with~$p\neq{p_\ast}$,
	\item~$V_{\varepsilon}^{-1}(\leq{z})$ is compact for every~$z\in(0,z_\ast)$.
\end{itemize}
\end{lem}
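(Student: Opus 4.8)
The plan is to split $V_{\varepsilon}$ into its two manifestly nonnegative contributions, $y_\ast-\psi(p)$ and $\varepsilon\,\langle\nabla\psi(p),o-o_{\perp}\rangle^2$, and to treat the three assertions separately. The strict positivity is immediate: for $p\neq p_\ast$, condition~\ref{cond:1} gives $y_\ast-\psi(p)>0$, while the angular summand is~$\geq0$, so $V_{\varepsilon}(x)>0$ for \emph{every} $\varepsilon\in\mathbb{R}_+$. The compactness claim reduces to condition~\ref{cond:6}: since $V_{\varepsilon}(x)\geq y_\ast-\psi(p)$, the sublevel set $V_{\varepsilon}^{-1}(\leq z)$ is contained in $\psi^{-1}(\geq y_\ast-z)\times\mathbb{S}^1$; for $z\in(0,z_\ast)$ we have $y_\ast-z\in(\underline{y},y_\ast)$, so the first factor is compact by condition~\ref{cond:6}, the second factor is compact, and $V_{\varepsilon}^{-1}(\leq z)$ is a \emph{closed} subset of this product, hence compact. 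Both of these statements hold for all $\varepsilon$; only the upper bound will force $\varepsilon$ to be small.

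The crux is the strict upper bound $V_{\varepsilon}(x)<z_\ast$, which is equivalent to $\varepsilon\,\langle\nabla\psi(p),o-o_{\perp}\rangle^2<\psi(p)-\underline{y}$. When $z_\ast=y_\ast-\underline{y}=+\infty$ this is automatic, because $V_{\varepsilon}$ is finite, so I would dispose of that case first and then assume $\underline{y}\in\mathbb{R}$. In the remaining case I first discard the angular dependence by Cauchy--Schwarz: since $o,o_{\perp}$ are orthonormal we have $|o-o_{\perp}|^2=2$, whence $\langle\nabla\psi(p),o-o_{\perp}\rangle^2\leq2\,|\nabla\psi(p)|^2$. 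The key analytic input is then a global estimate of the squared gradient by the suboptimality, namely $|\nabla\psi(p)|^2\leq 2c_1\,(\psi(p)-\underline{y})$ for all $p\in\mathbb{R}^2$. This follows from condition~\ref{cond:4}: the uniform Hessian bound $c_1$ makes $\nabla\psi$ Lipschitz with constant $c_1$, so the descent lemma gives $\psi(q)\leq\psi(p)+\langle\nabla\psi(p),q-p\rangle+\tfrac{c_1}{2}|q-p|^2$; evaluating at the minimizer $q=p-\tfrac{1}{c_1}\nabla\psi(p)$ and using $\psi(q)\geq\underline{y}$ yields the stated inequality.

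Combining the two estimates gives $V_{\varepsilon}(x)\leq(y_\ast-\psi(p))+4\varepsilon c_1\,(\psi(p)-\underline{y})$, and the right-hand side is strictly below $z_\ast$ precisely when $4\varepsilon c_1\,(\psi(p)-\underline{y})<\psi(p)-\underline{y}$. At this point I would record that $\psi(p)-\underline{y}>0$ for every $p$: the infimum $\underline{y}$ cannot be attained at a finite point, since by condition~\ref{cond:5} the only critical point is the maximizer $p_\ast$, where $\psi=y_\ast>\underline{y}$ (the strict inequality $\underline{y}<y_\ast$ itself being forced by condition~\ref{cond:2}). Hence division is legitimate, and the desired inequality holds for all $p$ as soon as $4\varepsilon c_1<1$. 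Setting $\varepsilon_1:=1/(4c_1)$ therefore secures the upper bound for every $\varepsilon\in(0,\varepsilon_1)$, which together with the first paragraph completes the proof.

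The step I expect to be the genuine obstacle is the global gradient--suboptimality bound $|\nabla\psi(p)|^2\leq 2c_1(\psi(p)-\underline{y})$, since everything else is bookkeeping once the angular factor has been reduced and the unbounded-below case isolated. Two points need care there: one must use condition~\ref{cond:4} \emph{globally} (not merely near $p_\ast$) to invoke the descent lemma, and one must keep the constants honest so that the resulting threshold $\varepsilon_1=1/(4c_1)$ delivers a \emph{strict} inequality uniformly in $p$ and $o$.
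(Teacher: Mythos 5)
Your proof is correct and takes essentially the same route as the paper's: both rest on the Taylor/descent-lemma estimate $|\nabla\psi(p)|^2\leq 2c_1\,(\psi(p)-\underline{y})$ extracted from condition~4, the Cauchy--Schwarz reduction $\langle\nabla\psi(p),o-o_{\perp}\rangle^2\leq 2|\nabla\psi(p)|^2$, and the resulting threshold $\varepsilon_1=1/(4c_1)$. The only cosmetic difference is that the paper packages everything into two-sided sandwich bounds such as $(y_\ast-\psi(p))/2\leq V_{\varepsilon}(x)\leq 2(y_\ast-\psi(p))$ (which it reuses later in the ISS proof), whereas you handle compactness directly from the trivial bound $V_{\varepsilon}\geq y_\ast-\psi$ together with condition~6, and isolate the case $z_\ast=+\infty$ explicitly; both are fine.
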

\begin{proof}
In the proof, we use conditions~\ref{cond:1} and~\ref{cond:4}-\ref{cond:6} in \Cref{ass:07}. A Taylor expansion of~$\psi$ at~$p\pm{v}$ around~$p\in\mathbb{R}^2$, using condition \ref{cond:4} for the remainder, leads to the estimates~$\mp\psi(p\pm{v})\leq\mp\psi(p)-c_1|v|^2/2$, where~$v:=\nabla\psi(p)/c_1$. Note that~$\psi(p+v)\leq{y_\ast}$ and~$\underline{y}\leq\psi(p-v)$ by conditions~\ref{cond:1} and~\ref{cond:6}, respectively. It follows that
\[
|\nabla\psi(p)|^2\leq2{c_1}(\psi(p)-\underline{y}),\quad |\nabla\psi(p)|^2\leq2{c_1}(y_\ast-\psi(p))
\]
for every~$p\in\mathbb{R}^2$. Choose~$\varepsilon_1\in\mathbb{R}_+$ such that
\[
1/2 \ < 1-4c_1\varepsilon_1 \ < \ 1+4c_1\varepsilon_1 \ < \ 2
\]
and fix an arbitrary~$\varepsilon\in(0,\varepsilon_1)$. Using the Cauchy-Schwarz inequality, we conclude that
\begin{align}
(y_\ast-\psi(p))/2 & \ \leq \ V_{\varepsilon}(x) \ \leq \ 2\,(y_\ast-\psi(p)), \label{eq:A1} \\
(\psi(p)-\underline{y})/2 & \ \leq \ z_\ast-V_{\varepsilon}(x) \ \leq \ 2\,(\psi(p)-\underline{y}) \nonumber
\end{align}
for every~$x=[p,o]\in\text{\sffamily{\upshape{X}}}$. Now, using conditions~\ref{cond:1},~\ref{cond:5}, and~\ref{cond:6}, it is easy to check that the assertions are true.
\end{proof}
As an abbreviation, for each~$a\in\mathbb{R}$, define~$\phi^a\colon{\text{\sffamily{\upshape{X}}}}\to\mathbb{R}$ by
\begin{align*}
\phi^a(x) & \ := \ \frac{1}{T}\int_0^T\int_0^1(1-s)\,U(\tau)\,U(\tau)\,v(\tau) \\
& \qquad\qquad\times\langle{\nabla^2\psi(p+asU(\tau)o)o,o}\rangle\,\mathrm{d}s\,\mathrm{d}\tau.
\end{align*}
For every~$w\in\mathbb{R}$, let~$x\mapsto(\bar{\Sigma}^a_{\text{\sffamily{\upshape{X}}}}V_{\varepsilon})(x,w)$ denote the Lie derivative of~$V_{\varepsilon}$ along the vector field~$x\mapsto\bar{\Sigma}^a_{\text{\sffamily{\upshape{X}}}}(x,w)$.
\begin{lem}\label{lem:02}
There exists~$\varepsilon_2\in\mathbb{R}_+$ such that
\[
(\bar{\Sigma}^a_{\text{\sffamily{\upshape{X}}}}V_{\varepsilon})(x,w) \leq (2a|\phi^a(x)|+2|w|-\varepsilon\Omega|\nabla\psi(p)|)|\nabla\psi(p)|
\]
for every~$\varepsilon\in(0,\varepsilon_2)$, every~$a\in\mathbb{R}_+$, every~$w\in\mathbb{R}$, and every~$x=[p,o]\in\text{\sffamily{\upshape{X}}}$ with~$\Omega\in\mathbb{R}_+$ as in~\cref{eq:39}.
\end{lem}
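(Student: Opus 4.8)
The plan is to compute $(\bar{\Sigma}^a_{\text{\sffamily{\upshape{X}}}}V_{\varepsilon})(x,w)$ in closed form and then extract one \emph{order-one} negative term and one \emph{order-$\varepsilon$} negative term that jointly dominate the rest once $\varepsilon$ is small. First I would note that, by~\cref{eq:43}, the remainder field is the scalar multiple $\delta\bar{G}^a(\bar{x})=a\,\phi^a(x)\,F_1(\bar{x})$, so along $x=[p,o]$ the averaged field in~\cref{eq:42} collapses to $\bar{\Sigma}^a_{\text{\sffamily{\upshape{X}}}}(x,w)=F_0(x)+\lambda\,F_1(x)$ with $\lambda:=\overline{Uv}\,g_\parallel+a\,\phi^a(x)+w$, where I abbreviate $g_\parallel:=\langle\nabla\psi(p),o\rangle$ and $g_\perp:=\langle\nabla\psi(p),o_\perp\rangle$ (so $g_\parallel^2+g_\perp^2=|\nabla\psi(p)|^2$ and the Lyapunov correction is $\varepsilon(g_\parallel-g_\perp)^2$). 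The two Lie derivatives are elementary: since $F_1$ differentiates in the position direction $o$ and leaves $\theta$, hence $o$ and $o_\perp$, fixed, one gets $F_1V_{\varepsilon}=-g_\parallel+2\varepsilon(g_\parallel-g_\perp)\,H$ with $H:=\langle\nabla^2\psi(p)\,o,\,o-o_\perp\rangle$; and since $F_0=\Omega\,\partial_\theta$ acts only on $\theta$ with $\partial_\theta(o-o_\perp)=o+o_\perp$, one gets $F_0V_{\varepsilon}=2\varepsilon\Omega\,(g_\parallel-g_\perp)(g_\parallel+g_\perp)=2\varepsilon\Omega\,(g_\parallel^2-g_\perp^2)$, using that $(o,o_\perp)$ is orthonormal.

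Expanding $F_0V_{\varepsilon}+\lambda\,F_1V_{\varepsilon}$ then yields exactly five groups: the order-one term $-\overline{Uv}\,g_\parallel^2$; the order-$\varepsilon$ terms $2\varepsilon\Omega\,g_\parallel^2$ and $-2\varepsilon\Omega\,g_\perp^2$; the disturbance term $-(a\,\phi^a+w)\,g_\parallel$; and the two cross terms $2\varepsilon\,\overline{Uv}\,g_\parallel(g_\parallel-g_\perp)H$ and $2\varepsilon(a\,\phi^a+w)(g_\parallel-g_\perp)H$. The two genuinely useful negative terms are $-\overline{Uv}\,g_\parallel^2$ in the $o$-direction and $-2\varepsilon\Omega\,g_\perp^2$ in the $o_\perp$-direction; the sole purpose of the correction $\varepsilon(g_\parallel-g_\perp)^2$ in $V_{\varepsilon}$ is to supply the second of these through the rotation $F_0$.

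For the estimates I would invoke condition~\ref{cond:4}, which gives $|H|\le c_1|o-o_\perp|=\sqrt{2}\,c_1$, together with $|g_\parallel-g_\perp|\le\sqrt{2}\,|\nabla\psi(p)|$. The disturbance term is bounded directly by $(a|\phi^a|+|w|)\,|\nabla\psi(p)|$, and for $\varepsilon\le1/(4c_1)$ the disturbance cross term is bounded by another $(a|\phi^a|+|w|)\,|\nabla\psi(p)|$; their sum is exactly the advertised $(2a|\phi^a|+2|w|)\,|\nabla\psi(p)|$. In the quadratic part the $g_\parallel^2$ coefficient is $-\overline{Uv}+O(\varepsilon)$, hence $\le-\varepsilon\Omega$ for small $\varepsilon$; the only fragile coefficient is that of $g_\perp^2$, whose entire negative reserve is the order-$\varepsilon$ quantity $-2\varepsilon\Omega\,g_\perp^2$.

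The main obstacle is precisely this $g_\perp^2$ direction: the intrinsic cross term contains $-2\varepsilon\,\overline{Uv}\,H\,g_\parallel g_\perp$, which is itself of order $\varepsilon$ and, under a naive Cauchy--Schwarz split, could exhaust the $-2\varepsilon\Omega\,g_\perp^2$ reserve. The resolution is a \emph{weighted} Young inequality $2|g_\parallel g_\perp|\le\kappa\,g_\parallel^2+\kappa^{-1}g_\perp^2$ with a large but $\varepsilon$-independent weight $\kappa$ built from $\overline{Uv},\Omega,c_1$ (e.g.\ $\kappa=2\sqrt{2}\,c_1\overline{Uv}/\Omega$): this routes almost all of the cross term into the $g_\parallel^2$ budget, which has the order-one reserve $-\overline{Uv}\,g_\parallel^2$ to absorb it, while charging at most $\tfrac12\varepsilon\Omega\,g_\perp^2$ against the $g_\perp^2$ reserve. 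Choosing $\varepsilon_2$ small enough (depending only on $\overline{Uv},\Omega,c_1$) so that every residual order-$\varepsilon$ contribution to the $g_\parallel^2$ coefficient stays below $\overline{Uv}-\varepsilon\Omega$ and so that $\varepsilon_2\le1/(4c_1)$, the quadratic part is dominated by $-\varepsilon\Omega(g_\parallel^2+g_\perp^2)=-\varepsilon\Omega\,|\nabla\psi(p)|^2$, and collecting the pieces gives the claimed inequality.
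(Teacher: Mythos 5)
Your proposal is correct and follows essentially the same route as the paper's proof: the paper also organizes the Lie derivative into a quadratic form $-\langle Q_{\varepsilon}(x)\nabla\psi(p),\nabla\psi(p)\rangle$ plus a disturbance term $(a\phi^a(x)+w)\langle b_{\varepsilon}(x),\nabla\psi(p)\rangle$, and then shows (via condition~\ref{cond:4}) that for small $\varepsilon$ the smallest eigenvalue of $Q_{\varepsilon}(x)$ is at least $\varepsilon\Omega$ and $|b_{\varepsilon}(x)|\leq2$, which is exactly what your $(g_\parallel,g_\perp)$-frame computation with the weighted Young inequality establishes. You have simply carried out explicitly the ``lengthy but direct computation'' that the paper leaves to the reader.
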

\begin{proof}
Following the idea in the proof of Theorem~6.45 in~\cite{BulloBook}, a lengthy but direct computation reveals that
\begin{align*}
(\bar{\Sigma}^a_{\text{\sffamily{\upshape{X}}}}V_{\varepsilon})(x,w) & \ = \ (a\phi^{a}(x)+w)\,\langle{b_{\varepsilon}(x),\nabla\psi(p)}\rangle \\
& \qquad - \langle{Q_{\varepsilon}(x)\nabla\psi(p),\nabla\psi(p)\rangle}
\end{align*}
for all~$\varepsilon,a\in\mathbb{R}_+$, every~$x=[p,o]\in\text{\sffamily{\upshape{X}}}$, and every~$w\in\mathbb{R}$ with a certain symmetric matrix~$Q_{\varepsilon}(x)\in\mathbb{R}^{2\times2}$ and a certain vector~$b_{\varepsilon}(x)\in\mathbb{R}^2$. Using condition~\ref{cond:4} in \Cref{ass:07}, one can prove that there exists~$\varepsilon_2\in\mathbb{R}_+$ such that, for every~$\varepsilon\in(0,\varepsilon_2)$ and every~$x\in\text{\sffamily{\upshape{X}}}$, the smallest eigenvalue of~$Q_{\varepsilon}(x)$ is~$\geq\varepsilon\Omega$ and the norm of~$b_{\varepsilon}(x)$ is~$\leq{2}$.
\end{proof}
\begin{lem}\label{lem:03}
There exist~$a_0',\kappa\in\mathbb{R}_+$ such that
\[
|\phi^a(x)| \ \leq \ \kappa\,|\nabla\psi(p)|
\]
for every~$a\in[0,a_0']$ and every~$x=[p,o]\in\text{\sffamily{\upshape{X}}}$.
\end{lem}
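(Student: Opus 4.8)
The plan is to exploit the boundedness of the dither signals together with the structural conditions in \Cref{ass:07}, and to split $\text{\sffamily{\upshape{X}}}$ according to the distance of $p$ to $p_\ast$. Recall from \Cref{exm:01} that $U(\tau)=\alpha\sin(\varpi\tau)$ and $v(\tau)=c\sin(\varpi\tau)$, so that $|U(\tau)|\leq\alpha$ and $|v(\tau)|\leq{c}$ for every $\tau$. Since $|o|=1$ and $s\in[0,1]$, the perturbation $asU(\tau)o$ in the argument of $\nabla^2\psi$ has norm at most $a\alpha$. I would therefore fix $a_0':=\min\{r_1,r_3\}/\alpha$ so that, for every $a\in[0,a_0']$, this perturbation stays within both the symmetry radius $r_1$ of condition~\ref{cond:3} and the radius $r_3$ of condition~\ref{cond:7}. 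Throughout, I would bound $\langle\nabla^2\psi(\cdot)o,o\rangle$ by the operator norm $|\nabla^2\psi(\cdot)|$ and pull the bounded factor $|U(\tau)^2v(\tau)|\leq\alpha^2c$ out of the integral.

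For the far region $|p-p_\ast|\geq{r_2}$, condition~\ref{cond:7} applies directly: since $|asU(\tau)o|\leq{a_0'\alpha}\leq{r_3}$, we obtain $|\nabla^2\psi(p+asU(\tau)o)|\leq{c_2}\,|\nabla\psi(p)|$ for every $s\in[0,1]$. Inserting this into the definition of $\phi^a$ and using $\int_0^1(1-s)\,\mathrm{d}s=1/2$ yields $|\phi^a(x)|\leq\kappa_1\,|\nabla\psi(p)|$ with $\kappa_1:=\alpha^2c\,c_2/2$, uniformly in $a\in[0,a_0']$.

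The near region $|p-p_\ast|\leq{r_2}$ is the crux. Here $|\nabla\psi(p)|$ may be arbitrarily small, so the bound can only succeed if $\phi^a$ itself is small. The decisive observation is that $\phi^a(x)$ vanishes at $p=p_\ast$: by the local even symmetry of condition~\ref{cond:3} we have $\nabla^2\psi(p_\ast+w)=\nabla^2\psi(p_\ast-w)$ for $|w|\leq{r_1}$, so for $a\in[0,a_0']$ the map $\tau\mapsto\langle\nabla^2\psi(p_\ast+asU(\tau)o)o,o\rangle$ is invariant under $U(\tau)\mapsto-U(\tau)$. After the substitution $\theta=\varpi\tau$, the remaining factor $U(\tau)^2v(\tau)\propto\sin^3\theta$ is odd under $\theta\mapsto\theta+\pi$, whereas the Hessian factor is even; hence the $\tau$-integral vanishes for each fixed $s$, giving $\phi^a(p_\ast,o)=0$ for all $o\in\mathbb{S}^1$ and all $a\in[0,a_0']$. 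Since $\psi$ is smooth, $\phi^a(p,o)$ is jointly smooth in $(a,p,o)$, so the mean value theorem along the segment from $p_\ast$ to $p$ gives $|\phi^a(x)|\leq{C}\,|p-p_\ast|$, where $C$ is the supremum of $|\nabla_p\phi^a|$ over the compact set $[0,a_0']\times\{|p-p_\ast|\leq{r_2}\}\times\mathbb{S}^1$.

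It remains to convert this into a bound by $|\nabla\psi(p)|$. By condition~\ref{cond:2}, $\nabla^2\psi(p_\ast)$ is nonsingular and $\nabla\psi(p_\ast)=0$, so there are $r_0\in(0,r_2]$ and $\lambda\in\mathbb{R}_+$ with $|\nabla\psi(p)|\geq\lambda\,|p-p_\ast|$ for $|p-p_\ast|\leq{r_0}$; combined with the previous estimate this gives $|\phi^a(x)|\leq(C/\lambda)\,|\nabla\psi(p)|$ on the inner ball. On the compact annulus $r_0\leq|p-p_\ast|\leq{r_2}$, condition~\ref{cond:5} forces $|\nabla\psi|$ to attain a positive minimum $\mu$, while condition~\ref{cond:4} bounds the integrand, so $|\phi^a(x)|\leq\alpha^2c\,c_1/2\leq(\alpha^2c\,c_1/(2\mu))\,|\nabla\psi(p)|$ there. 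Taking $\kappa:=\max\{\kappa_1,\,C/\lambda,\,\alpha^2c\,c_1/(2\mu)\}$ completes the proof. The main obstacle is the near region, and specifically the symmetry argument establishing $\phi^a(p_\ast,o)=0$; once this vanishing is secured, the non-degeneracy of the Hessian upgrades the elementary Lipschitz bound into the desired estimate by $|\nabla\psi(p)|$.
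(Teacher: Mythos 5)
Your proposal is correct and follows essentially the same route as the paper's proof: a three-region decomposition (near $p_\ast$, far field via condition~\ref{cond:7}, and a compact annulus via conditions~\ref{cond:4} and~\ref{cond:5}), with the near-region bound obtained by showing $\phi^a$ vanishes on $\{p_\ast\}\times\mathbb{S}^1$ through the symmetry condition~\ref{cond:3}, then combining a smoothness/Lipschitz estimate with the gradient lower bound $|\nabla\psi(p)|\geq\lambda|p-p_\ast|$ coming from the nondegenerate Hessian. Your version merely makes explicit what the paper leaves implicit (the $\sin^3$-oddness computation behind $\phi^a(p_\ast,o)=0$ and the concrete constants), so there is no substantive difference.
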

\begin{proof}
In the proof, we use conditions~\ref{cond:1}-\ref{cond:3}, \ref{cond:5}, and~\ref{cond:7} in \Cref{ass:07}. Conditions~\ref{cond:1} and~\ref{cond:2} ensure the existence of~$\kappa_1',\rho_1\in\mathbb{R}_+$ such that~$|p-p_\ast|\leq\kappa_1'|\nabla\psi(p)|$ for every~$p\in\mathbb{R}^2$ with~$|p-p_\ast|\leq\rho_1$. It follows from condition~\ref{cond:3} that there exists~$a_1\in\mathbb{R}_+$ such that~$\phi^a(x_\ast)=0$ for every~$a\in[0,a_1]$ and every~$x_\ast\in\{p_\ast\}\times\mathbb{S}^1$. Since~$(a,x)\mapsto\phi^a(x)$ is smooth, we may conclude that there exists~$\kappa_1''\in\mathbb{R}_+$ such that~$|\phi^a(x)|\leq\kappa_1''|p-p_\ast|$ for every~$a\in[0,a_1]$ and every~$x=[p,o]\in\text{\sffamily{\upshape{X}}}$ with~$|p-p_\ast|\leq\rho_1$. Consequently, $|\phi^a(x)|\leq\kappa_1|\nabla\psi(p)|$ for every~$a\in[0,a_1]$ and every~$x=[p,o]\in\text{\sffamily{\upshape{X}}}$ with~$|p-p_\ast|\leq\rho_1$, where~$\kappa_1:=\kappa_1'\kappa_1''\in\mathbb{R}_+$. It follows from condition~\ref{cond:7} and the definition of~$\phi^a$ that there exist~$a_2,\kappa_2,\rho_2\in\mathbb{R}_+$ such that~$|\phi^a(x)|\leq\kappa_2|\nabla\psi(p)|$ for every~$a\in[0,a_2]$ and every~$x=[p,o]\in\text{\sffamily{\upshape{X}}}$ with~$|p-p_\ast|\geq\rho_2$. Let~$a_0':=\min\{a_1,a_2\}\in\mathbb{R}_+$. It follows from condition~\ref{cond:5} that there exists~$\kappa_3\in\mathbb{R}_+$ such that~$|\phi^a(x)|\leq\kappa_3|\nabla\psi(p)|$ for every~$a\in[0,a_0']$ and every~$x=[p,o]\in\text{\sffamily{\upshape{X}}}$ with~$\rho_1\leq|p-p_\ast|\leq\rho_2$. The claim follows if we define~$\kappa:=\max\{\kappa_1,\kappa_2,\kappa_3\}\in\mathbb{R}_+$.
\end{proof}
Let~$\varepsilon_1,\varepsilon_2\in\mathbb{R}_+$ as in \Cref{lem:01,lem:02} and define~$\varepsilon_0:=\min\{\varepsilon_1,\varepsilon_2\}$. For the rest of the appendix, fix an arbitrary~$\varepsilon\in(0,\varepsilon_0)$ and abbreviate~$V_{\varepsilon}$ by~$V$. Let~$a_0',\kappa\in\mathbb{R}_+$ as in \Cref{lem:03} and define~$a_0:=\min\{a_0',\varepsilon\Omega/(2\kappa)\}$. For the rest of the appendix, fix an arbitrary~$a\in(0,a_0)$. Then, we obtain from \Cref{lem:02,lem:03} that
\[
\dot{V}(x,w)\,:=\,(\bar{\Sigma}^a_{\text{\sffamily{\upshape{X}}}}V)(x,w)\,\leq\,(2|w|-\delta|\nabla\psi(p)|)\,|\nabla\psi(p)|
\]
for every~$x\in\text{\sffamily{\upshape{X}}}$ and every~$w\in\mathbb{R}$, where~$\delta:=\varepsilon\Omega/2\in\mathbb{R}_+$.

\subsection{Proof of 0-GAS}
We know that~$V=V_{\varepsilon}$ has the properties in \Cref{lem:01}. Because of condition~\ref{cond:5} in \Cref{ass:07}, $\dot{V}(x,0)=-\delta|\nabla\psi(p)|^2<0$ for every~$x=[p,o]\in\text{\sffamily{\upshape{X}}}$ with~$p\neq{p_\ast}$. Now, a standard Lyapunov argument can be applied to show that~$\dot{x}=\bar{\Sigma}^a_{\text{\sffamily{\upshape{X}}}}(x,0)$ is GAS w.r.t.~$\{p_\ast\}\times\mathbb{S}^1$.

\subsection{Proof of ISS}
Suppose that, additionally, \Cref{ass:08} is satisfied with~$\chi_1,\chi_2,\chi_3\in\mathcal{K}_{\infty}$ as therein. Then, because of~\Cref{eq:A1},
\[
\chi_1(|p-p_\ast|)/2 \ \leq \ V(x) \ \leq \ 2\,\chi_2(|p-p_\ast|)
\]
for every~$x=[p,o]\in\text{\sffamily{\upshape{X}}}$. Moreover, for every~$x=[p,o]\in\text{\sffamily{\upshape{X}}}$ and every~$w\in\mathbb{R}$, the following implication holds:
\[
|p-p_\ast| \ > \ \chi_3^{-1}(2|w|/\delta) \qquad \Longrightarrow \qquad \dot{V}(x,w) \ < \ 0.
\]
It follows that~$V$ is an ISS-Lyapunov function for~$\dot{x}=\bar{\Sigma}^a_{\text{\sffamily{\upshape{X}}}}(x,w)$ with respect to~$\{p_\ast\}\times\mathbb{S}^1$. By Theorem~1 in~\cite{Sontag1995}, we conclude that~$\dot{x}=\bar{\Sigma}^a_{\text{\sffamily{\upshape{X}}}}(x,w)$ is ISS w.r.t.~$\{p_\ast\}\times\mathbb{S}^1$.
\end{document}